\documentclass[10pt,letterpaper]{article}

\usepackage{amsmath}
\usepackage{graphicx}
\usepackage{amsfonts}
\usepackage{amssymb}
\usepackage[letterpaper]{geometry}
\usepackage{comment}
\usepackage{color}
\geometry{verbose,tmargin=3cm,bmargin=3cm,lmargin=3cm,rmargin=3cm}

\newtheorem{theorem}{Theorem}

\newtheorem{lemma}[theorem]{Lemma}

\newenvironment{proof}[1][Proof]{\noindent\textbf{#1.} }{\ \rule{0.5em}{0.5em}}
\numberwithin {equation}{section}
\numberwithin {theorem}{section}

\begin{document}

\title{Quantizing the geodesic flow via \\adapted complex structures}
\author{William D. Kirwin\thanks{Mathematics Institute, University of Cologne,
Weyertal 86 - 90, 50931 Cologne, Germany.\newline email:
\texttt{will.kirwin@gmail.com}}}
\date{}
\maketitle

\begin{abstract}
The geometric quantization of the geodesic flow on a compact Riemannian manifold via the BKS ``dragging projection'' yields the Laplacian plus a scalar curvature term. To avoid convergence issues, the standard construction involves somewhat unnatural hypotheses that do not hold in typical examples. In this paper, we use adapted complex structures to make sense of a Wick-rotated version of the dragging projection which avoids the convergence issues.
\end{abstract}

\section{Introduction}

The dynamics of a free particle moving on a Riemannian manifold $(M,g)$ is modeled by the geodesic flow on the cotangent bundle $T^{\ast}M$, which is itself the Hamiltonian flow of the free-particle kinetic energy function $E:=\frac{1}{2}\left\vert p\right\vert ^{2}$. Quantizing $E$ in ``position'' space $\mathcal{H}:=L^2(M,d\mathrm{vol}_g)$ then yields an operator $Q(E):\mathcal{H}\rightarrow\mathcal{H}$ called the (quantum) Hamiltonian. Computing $Q(E)$ turns out, however, to be a surprisingly subtle art and there does not seem to be a unique answer, although all techniques yield an operator of the form
\begin{equation}
\label{eqn:QE}
Q(E)=-\frac{\hbar^{2}}{2}\left(\Delta-cS\right),
\end{equation}
where $\Delta$ is the $g$-Laplacian on $M$, $S$ is the scalar curvature of the metric $g$, and $c$ is an essentially author-dependent (or, perhaps more precisely, quantization-technique-dependent) constant\footnote{The particular value of the constant $c$ seems to be related to a choice of operator ordering, although we do not know the precise dependence. In \cite{Douglas-Klevtsov}, for example, one finds a value of $c=1/2$ using normal ordering. In this paper, we will use ``position-space'' geometric quantization, which should correspond to the anti-Kohn-Nirenberg ordering, for which the value of $c$ is $1/6$. It seems that an entire continuous range of values is possible and many appear in the literature \cite{Bastianelli-VanNieuwenhuizen,Douglas-Klevtsov,Driver-Andersson,Baer-Pfaeffle,DeWitt,Fulling,Prat-Waldron}.}, which in our case will be $c=1/6$.

One standard way of computing $Q(E)$ in geometric quantization is via the so-called ``dragging projection'' construction \cite{Woodhouse,SniatyckiGQ}. This construction, as it is usually presented, is riddled with convergence difficulties (which can likely be overcome with sufficient dedication); we explain them in detail in Section \ref{sec:probs}. In this paper, we will use recent techniques involving adapted complex structures to make rigorous sense of a Wick-rotated version of the dragging projection which avoids all of these difficulties.

We begin with a brief description of the dragging projection and the associated difficulties, after which we explain how adapted complex structures and the idea of complex-time evolution alleviate these difficulties, admittedly at the expense of introducing some more advanced geometric machinery.

The position-space quantization of $(M,g)$ is, according to geometric quantization, the space of sections of a certain line bundle over $T^{\ast}M$ that are covariantly constant along the fibers of $\pi:T^{\ast}M\rightarrow M$. To quantize the geodesic flow, one lifts the geodesic flow to this line bundle to obtain a ``quantized'' flow on the space of vertically constant sections, but one immediately sees that the lifted flow does not actually preserve the property ``covariantly constant along the fibers''.

If $P$ is a (complex integrable) Lagrangian distribution on $T^{\ast}M$ which is transverse to the vertical tangent bundle, there is a canonical nondegenerate pairing, called the BKS (Blattner--Kostant--Sternberg) pairing, between sections of our line bundle which are covariantly constant along $P$ and sections which are covariantly constant in the vertical directions. The BKS pairing therefore induces a projection from $P$-constant sections to vertically constant sections.

The ``dragging projection'' construction of $Q(E)$ is as follows: 1) lift the geodesic flow to the line bundle, 2) flow a vertically constant section for a short time, and 3) project back to the space of vertically constant sections using the BKS pairing.

The convergence problems arise when one tries to actually compute the BKS projection. First, there is no guarantee that the pushforward by the geodesic flow of the vertical tangent bundle yields a distribution which is transverse to the vertical tangent bundle at every point. Of course, the set of points where the two distributions are not transverse should be measure zero and one can probably just ignore them, but a little care has to be taken with how the BKS pairing behaves near such points.

Second, when one simply writes down the BKS pairing between vertically constant sections and the geodesically flowed sections, the integral which results is divergent. It  is then necessary to introduce some regularization procedure to make sense of the integral.

Finally, we want to do this for ``short'' times, which means that we do it for arbitrary (small) time $t$ and then take the derivative at $t=0$. The usual approach is to use a stationary phase approximation to compute an asymptotic expansion of the BKS pairing as $t\rightarrow0$ and then read off the coefficient of $t$. But in order to even apply a stationary phase approximation, one must show that the tails of the integral vanish. Given that the integral is divergent and must first be regularized, applying stationary phase becomes a delicate issue.

It is likely that all of these issues can be dealt with, and even our construction, which as we now explain has none of these problems, is formally very similar to the classical construction (and of course yields the same ``answer'').

The fundamental observation of this article is that a Wick rotation, that is, replacing $t$ by $it$ (more precisely, analytically continuing in $t$ and evaluating at time $it$) eliminates all of the problems in the usual dragging projection construction of $Q(E)$. The price to pay is that one cannot simply replace $t$ by $it$ in all the formulas and proceed as usual; one must first make sense of the ``imaginary-time geodesic flow''. Fortunately, this has already been done: the pushforward of the vertical tangent bundle by the ``time-$i$ geodesic flow'' yields the $(1,0)$-tangent bundle of the adapted complex structure of Guillemin--Stenzel/Lempert--Sz\H{o}ke, and the ``time-$it$ flow'' can be understood as the pushforward of the adapted complex structure by rescaling in the fibers of $T^*M$ by $t$. We explain more about the geometry of these ``imaginary-time flows'' in Section \ref{sec:acs}.

We would be remiss if we did not mention other approaches to quantizing the geodesic flow on $(M,g)$. In \cite{DeWitt}, DeWitt finds a value of $1/6$ using physics path integral techniques. More recently, also using path integral techniques from physics, Bastianelli and Van Nieuwenhuizen \cite{Bastianelli-VanNieuwenhuizen}, and then Douglas and Klevtsov in the K\"{a}hler setting \cite{Douglas-Klevtsov}, compute that the quantum Hamiltonian of a free particle in the Bargmann-Fock representation is $-\hbar^{2}\Delta_{\bar{\partial}}+\frac{\hbar^{2}}{4}S$ (that is, they obtain $c=1/2$). In fact, they explain that their value of $c$ is related the choice of symmetric ordering, which is itself associated with Toeplitz quantization rather than position-space quantization, and, in particular, $c=1/2$ corresponds to how one defines the Heavyside step-function at $0$ in the propagators of the theory. The mathematically rigorous path integral computations of Driver and Andersson \cite{Driver-Andersson}, and later B\"ar und Pf\"affle \cite{Baer-Pfaeffle}, show that the constant $c$ is related to a choice of measure on the spaces of geodesic polygons which they use to approximate the path integral and can take an entire continuous range of values. Letting $\Lambda=-1/2$ in \cite[Thm 5.2]{Baer-Pfaeffle} yields our result $c=1/6$. In \cite{Fulling}, by using various powers of the Van-Vleck--Morette determinant in the Weyl quantization of the geodesic flow, Fulling argues that the constant $c$ appearing in (\ref{eqn:QE}) should be $0,$ $1/3$ or $1/6$, or really any value between $0$ and $1/6$, and that for various reasons $c=1/6$ might be the most ``natural'' value. In more recent work \cite{Prat-Waldron}, Prat-Waldron shows that there are corrections arising in the supersymmetric version of the dragging projection construction of $Q(E)$ which conspire to yield $c=1$.

The fact that a Wick rotation eliminates various convergence issues is far from unique to our setting. It is standard in mathematical physics to work in Euclidean time, that is, after a Wick rotation, for just this reason. Indeed, the path integrals considered in \cite{Bastianelli-VanNieuwenhuizen}, \cite{Douglas-Klevtsov}, \cite{Driver-Andersson}, and \cite{Baer-Pfaeffle} are all ``Euclidean-time'' path integrals.

\bigskip
The rest of the paper is organized as follows. We begin in Section \ref{sec:hfq} with a brief summary of half-form quantization before turning in Section \ref{sec:dragproj} to a more technical description of the usual construction of the dragging projection as found in \cite{SniatyckiGQ} or \cite{Woodhouse}; in particular, in Section \ref{sec:probs}, we give a precise account of the technical difficulties which arise. In Section \ref{sec:wr} we briefly discuss Wick rotation. In Section \ref{sec:acs}, we recall the results of the author and B. Hall which allow us to interpret the ``imaginary-time geodesic flows'' in terms of adapted complex structures. We conclude in Section \ref{sec:WrDP} with the precise quantization of the geodesic flow via a Wick-rotated dragging projection. Although many of the steps are formally similar to those of the classical construction, we include them both for completeness and to demonstrate the resolution of the analytical difficulties which occur in the standard construction.

\section{Half-form quantization} \label{sec:hfq}

Consider a symplectic manifold $(N,\omega)$ of real dimension $2n$. The complex Lagrangian Grassmannian of $N$ is
\[
\mathcal{L}_{\mathbb{C}}(N):=\{(x,P_{x}):x\in N,P_{x}\subset T_{x}^{\mathbb{C}}(N)\text{ Lagrangian}\}\overset{\varpi}{\rightarrow}N.
\]
A section $P\in\Gamma(\mathcal{L}_{\mathbb{C}}(N))$ is called a polarization if for each $x\in N$, $[P_{x},P_{x}]\subset P_{x}$, that is, if $P$ is an
integrable complex Lagrangian distribution.

A Lagrangian subspace $P_{x}\in T_{x}^{\mathbb{C}}(N)$ is said to be nonnegative if for each $x\in N$, $P_{x}$ is nonnegative in the sense that for each $Z\in P_{x},$ we have $-2i\omega(Z,\bar{Z})\geq0$ (here, $\omega$ is extended complex-linearly to the complexified tangent space). Denote the nonnegative complex Lagrangian Grassmannian of $N$ by $\mathcal{L}_{\mathbb{C}}^{+}(N)$. The fiber $\varpi^{-1}(x)$ is a contractible homogeneous space which can be parameterized by the Siegel upper half-space.

A metaplectic structure on $N$ is a Hermitian line bundle $\delta\rightarrow\mathcal{L}_{\mathbb{C}}^{+}(N)$ with compatible connection equipped with an isomorphism $\delta\otimes\delta\rightarrow\bigwedge^{n}\mathcal{L}_{\mathbb{C}}^{+}(N)$. A metaplectic structure exists on $N$ if and only if the second Stiefel--Whitney class of $N$ vanishes. In this paper, we will only be interested in certain $1$-parameter families of polarizations, so we will not dwell much on general constructions involving $\delta.$

For a polarization $P$, we set $P_{x}^{\ast}:=\{\left.  \alpha\in T_{x}^{\ast}N\right\vert X\,$\raisebox{0.44ex}{\rule{4pt}{0.35pt}}$\!\overset{\lrcorner}{}\,\alpha=0$ for all $X\in\bar{P}_{x}\}.$ The canonical bundle of $P$ is $\mathcal{K}^{P}:=\bigwedge^{n}P^{\ast}.$ If $P$ is everywhere positive, it defines a complex structure on $N$ by declaring the $(1,0)$-tangent space to be $P$, and in this case the canonical bundle of $P$ is the usual canonical bundle of $N$ as a complex manifold.

The pullback of the metaplectic structure along $P,$ called the bundle of $P$-half-forms (or just half-forms, if $P$ is clear from the context) is a line bundle $\delta^{P}:=P^{\ast}\delta\rightarrow N$ which is a square root of $\mathcal{K}^{P}$. It has a canonical Hermitian metric known as the Blattner--Kostant--Sternberg (BKS) pairing \cite{Woodhouse}.

The definition of the BKS pairing is simplest when $P\cap\bar{P}=\{0\}$. The Liouville volume form on $N$ is
\begin{equation}
\label{eqn:Lvol}
\varepsilon:=(-1)^{n(n-1)/2}\frac{\omega^{n}}{(2\pi\hbar)^{n}n!}.
\end{equation}
If $\mu$ and $\mu^{\prime}$ are sections of $\delta^{P}$, then $(\mu,\mu^{\prime})$ is the unique function on $N$ determined by
\begin{equation}
\label{eqn:BKS}
\bar{\mu}^{2}\wedge(\mu^{\prime})^{2}=i^{n}(\mu,\mu^{\prime})^{2}\varepsilon.
\end{equation}
(One should worry a bit about the sign of the square root, but as we will see, there is an obvious choice for the polarizations we are concerned with and the general structure will play no role in what follows.)

The canonical bundle $\mathcal{K}^{P}$ admits a partial connection given by $\nabla=X\,$\raisebox{0.44ex}{\rule{4pt}{0.35pt}}$\!\overset{\lrcorner}{}\,d$ for $X\in\bar{P}$, and the half-form bundle $\delta^{P}$ inherits a partial connection from that on $\mathcal{K}^{P}$ via $2\mu\otimes\nabla^{\delta^{P}}\mu:=\nabla^{\mathcal{K}^{P}}(\mu^{2}).$ In particular, we say that a (possibly local) section $\mu$ of $\delta^{P}$ is covariantly constant along $\bar{P}$, or \emph{polarized,} if $\nabla_{X}^{\delta^{P}}\mu=0$ for all $X\in\bar{P}.$

The BKS pairing\ (\ref{eqn:BKS}) extends to a nondegenerate pairing between sections of $\delta^{P}$ and $\delta^{P^{\prime}}$ so long as $\bar{P}\cap P^{\prime}=\{0\}.$ Since the pairing is nondegenerate, it induces a map $\Pi_{P,P^{\prime}}:\Gamma(\delta^{P^{\prime}})\rightarrow\Gamma(\delta^{P})$ called the BKS projection. When $\bar{P}\cap P^{\prime}\neq\{0\}$, the definition is slightly more involved and we refer the reader to \cite[page 231]{Woodhouse}, although we point out that in general, the BKS pairing takes values in the set of densities on $T^{\mathbb{C}}N/(\bar{P}\cap P^{\prime}).$

Fix $\hbar>0$ such that $[\omega/2\pi\hbar]$ is integral. Let $L\rightarrow N$ be a Hermitian line bundle with connection $\nabla$ with curvature $-i\omega/\hbar$; that is, the bundle $L$ is a prequantum bundle for $(N,\omega).$ Given a polarization $P$, the Hermitian structure on $L$ and the BKS\ pairing on $\delta^{P}$ combine to yield a pointwise Hermitian structure on $L\otimes\delta^{P}$ determined by $\left(  \psi^{\prime}\otimes\mu^{\prime},\psi\otimes\mu\right)(x) =(\psi^{\prime},\psi)_{L}(x)(\mu^{\prime},\mu)_{\text{BKS}}(x).$ This pointwise Hermitian structure then yields a Hermitian inner product on $\Gamma(L\otimes\delta^{P})$ by integrating against the Liouville form $\varepsilon.$

Given a polarization $P$, the (half-form corrected) quantum Hilbert space $\mathcal{H}_{P}$ associated to $P$ is defined to be the $L^{2}$-closure of the space of smooth sections of $L\otimes P^{\ast}\delta$ which are covariantly constant along $P$, where the inner product is induced from the Hermitian structure on $L$ and the BKS pairing:
\[
\mathcal{H}_{P}:=L_{P}^{2}(N,L\otimes P^{\ast}\delta).
\]

The Kostant--Souriau\footnote{The Kostant--Souriau quantization $f\mapsto\hat{f}$ is a linear map which satisfies $[\hat{f},\hat{g}]=i\hbar\widehat{\left\{  f,g\right\}  }$ and $\hat{1}=1$ and hence comes close to satisfying Dirac's definition of quantization. However, the prequantum Hilbert space---the completion of $\Gamma(L)$ with respect to the inner product induced by the Hermitian structure on $L$---is too large, and one would like to restrict $\hat{f}$ to an operator on $\mathcal{H}_{P}$. But in general, the vector field $X_{f}$ does not preserve $P$ and hence, even if one extends the partial connection $\nabla^{\delta^{P}}$ to a full connection (since it is not defined if $X_{f}$ does not preserve $P$), the operator $\hat{f}$ defined by (\ref{eqn:KSq}) does not map polarized sections to polarized sections. Of course, if $f$ is tangent to $P$ (that is, $df$ vanishes on $P$) and $X_{f}$ preserves $P$ (that is, $[X_{f},P]\subset P$), then $\hat{f}$ is a well-defined operator on $\mathcal{H}_{P}$, but this turns out not to be the case for most interesting observables; in particular, the geodesic flow does not preserve any polarization.} quantization of a classical observable $f\in C^{\infty}(N)$ is the differential operator
\begin{equation}
\label{eqn:KSq}
\hat{f}:=i\hbar\nabla_{X_{f}}+f:\Gamma(L)\rightarrow\Gamma(L),
\end{equation}
where $X_{f}$ is the Hamiltonian vector field of $f$ determined by $X_{f}\,$\raisebox{0.44ex}{\rule{4pt}{0.35pt}}$\!\overset{\lrcorner}{}\,\omega=df$. Let $iz\partial_{z}$ denote the canonical $\mathbb{C}^{\times}$-invariant Liouville vector field on the fibers of $L.$ Then
\begin{equation}
\label{eqn:Vf}
V_{f}:=X_{f}^{hor}+f\cdot iz\partial_{z}
\end{equation}
is the infinitesimal lift of the flow of $X_{f}$ to the total space of $L$. The time-$\sigma$ flow of $V_{f}$ induces a flow $\hat{\rho}_{\sigma}^{f}:\Gamma(L)\rightarrow\Gamma(L)$ on sections of $L$ which is related to the operator $\hat{f}$ by
\begin{equation}
\label{eqn:KSasFlow}
\left.  \frac{d}{d\sigma}\right\vert _{\sigma=0}\hat{\rho}_{\sigma}^{f}(s)=-\frac{i}{\hbar}\hat{f}s,\quad s\in\Gamma(L)
\end{equation}
\cite[Prop. 3.1]{CharlesPI}.

\bigskip
We now specialize to the case $N=T^{\ast}M$, where $(M,g)$ is an $n$-dimensional real-analytic Riemannian manifold with real-analytic metric $g$. Let $\theta=p_{j}dx^{j}$ denote the canonical $1$-form on $T^{\ast}M$. Since $\omega^{T^{\ast}M}=-d\theta$ is exact, we can choose $L$ to be the trivial line bundle over $T^{\ast}M$ with connection
\begin{equation}
\nabla^{L}=d+i\theta/\hbar. \label{eqn:triv1}
\end{equation}

The complexified vertical tangent bundle $P^{0}:=\mathrm{Vert}^{\mathbb{C}}(T^{\ast}M)\subset T^{\mathbb{C}}(T^{\ast}M)$ is called the vertical polarization. Denote both the volume form on $M$ and its pullback to $T^{\ast}M$ by $d\mathrm{vol}_{g}$. The canonical bundle of $P^{0}$ is trivialized by $d\mathrm{vol}_{g}$. Therefore, we can take $\delta_{P^{0}}$ to be a trivial bundle\footnote{One should worry that we make consistent choices of $\delta^{P}$ that fit together into a metaplectic structure $\delta$, but as all of the half-form bundles which appear here are pullbacks of $\delta^{P^{0}}$, we only need to insure that the metaplectic structure is chosen such that $\delta^{P^{0}}$ is trivializable.} with trivializing section $\sqrt{d\mathrm{vol}_{g}}$; that is, the isomorphism $\delta_{P^{0}} \otimes\delta_{P^{0}}\simeq\mathcal{K}^{P^{0}}$ is defined by $\sqrt {d\mathrm{vol}_{g}}\otimes\sqrt{d\mathrm{vol}_{g}}\mapsto d\mathrm{vol}_{g}$.

As $\bar{P}^{0}\cap P^{0}=P^{0}\neq\{0\}$, the formula (\ref{eqn:BKS}) is totally degenerate on $\delta^{P^{0}}$. Nevertheless, the BKS pairing is defined in this case and yields $\left\vert \sqrt{d\mathrm{vol}_{g}}\right\vert _{\text{BKS}}^{2}=d\mathrm{vol}_{g}$. By (\ref{eqn:triv1}), a trivialized section of $L$ which is covariantly constant along $P^{0}$ is just a function on $T^{\ast}M$ of the form $\psi\circ\pi$ for $\psi\in C^{\infty}(M)$, and since $X\,$\raisebox{0.44ex}{\rule{4pt}{0.35pt}}$\!\overset{\lrcorner}{}\,d(\pi^{\ast}d\mathrm{vol}_{g})=0$ for any vertical vector $X$, a polarized section of $L\otimes\delta^{P^{0}}$ is of the form $\psi\circ\pi\sqrt{d\mathrm{vol}_{g}}$. The BKS pairing therefore induces the Hermitian inner product on $\mathcal{H}_{P^{0}}$ given by
\[
\left\langle \psi^{\prime},\psi\right\rangle _{\mathcal{H}_{P^{0}}} =\frac{1}{(2\pi\hbar)^{n/2}}\int_{M}\overline{\psi^{\prime}(q)}\psi(q)d\mathrm{vol}_{g}(q)
\]
whence $\mathcal{H}_{P^{0}}\simeq L^{2}(M,d\mathrm{vol}_{g})$.

We will find it repeatedly useful to work in normal coordinates on $M.$ Given $q\in M$, let $\{x^{j}\}$ be normal coordinates in a neighborhood $U$ centered at $q$. Then the $1$-forms $dx^{j}$ trivialize $T^{\ast}U$ via $p_{j}dx^{j}\in T_{x}^{\ast}U\mapsto(x,\vec{p})\in U\times\mathbb{R}^{n}$. The Riemannian volume in local coordinates is $d\mathrm{vol}_{g}(x)=\sqrt{\det(g_{jk}(x))}d^{n}x$, and since the canonical symplectic form in these coordinates is $\omega=dx^{j}\wedge dp_{j}$, the Liouville volume form (\ref{eqn:Lvol}) is
\begin{equation}
\label{eqn:Ldecomp}
\varepsilon=\frac{1}{(2\pi\hbar)^{n}\sqrt{\det(g_{jk}(x))}}d\mathrm{vol}_{g}\wedge d^{n}p.
\end{equation}

\section{The ``dragging projection''}\label{sec:dragproj}

\subsection{The standard construction}

Denote the time-$\sigma$ geodesic flow on $T^{\ast}M\simeq_{g}TM$ by $\Phi_{\sigma}:T^{\ast}M\rightarrow T^{\ast}M$. It is the Hamiltonian flow of the kinetic energy function $E(q,p):=\frac{1}{2}g_{q}(p,p)=\frac{1}{2}\left\vert p\right\vert ^{2}$. The geodesic flow acts on polarizations by pushforward, and for any polarization $P$, $(\Phi_{\sigma})_{\ast}P\neq P$ unless $\sigma=0$. In particular, $X_{E}$ does not preserve the vertical polarization, and so the Kostant--Souriau quantization of $E$ does not define an operator on the vertically polarized Hilbert space $\mathcal{H}_{P^{0}}$.

The standard ``dragging projection'' quantization of the geodesic flow is essentially the projection of the Kostant--Souriau quantization to $\mathcal{H}_{P^{0}}$, where the projection is achieved via the BKS map as follows (see, for example, \cite[Sec. 9.7]{Woodhouse} or \cite[Sec. 6.3,7.1]{SniatyckiGQ}). The idea is to lift the geodesic flow $\Phi_{\sigma}$ to $L$ and hence to a map $\hat{\rho}_{\sigma}:L^{2}(L\otimes\delta^{P^{0}})\rightarrow L^{2}(L\otimes\delta^{P^{\sigma}}).$ The BKS pairing then induces a map $\Pi_{P,P^{\prime}}:L^{2}(L\otimes\delta^{P^{\sigma}})\rightarrow L^{2}(L\otimes\delta^{P^{0}})$ with which the time-$\sigma$ flow of a section $\psi\sqrt{d\mathrm{vol}_{g}}\in\mathcal{H}_{P^{0}}$ can be projected back into $\mathcal{H}_{P^{0}}.$ The quantization $Q(E):\mathcal{H}_{P^{0}}\rightarrow\mathcal{H}_{P^{0}}$ of $E$ is, following equation (\ref{eqn:KSasFlow}),
\[
Q(E):=i\hbar\left.  \frac{d}{d\sigma}\right\vert _{\sigma=0}\Pi_{P^{0},P^{\sigma}}\circ\hat{\rho}_{\sigma}.
\]
Since $\Pi_{P^{0},P^{\sigma}}$ is defined in terms of the BKS pairing, one actually computes $Q(E)\psi$ by evaluating, for every $\psi^{\prime}\in L^{2}(M,d\mathrm{vol}_{g})$,
\begin{equation}
 \label{eqn:QE1}
\left\langle \psi^{\prime},Q(E)\psi\right\rangle _{L^{2}(M,d\mathrm{vol}_{g})}: =i\hbar\left.  \frac{d}{d\sigma}\right\vert _{\sigma=0}\left\langle\psi^{\prime}\sqrt{d\mathrm{vol}_{g}},\hat{\rho}_{\sigma}\left(  \psi\sqrt{d\mathrm{vol}_{g}}\right)  \right\rangle _{\text{BKS}}.
\end{equation}

The first step, then, is to compute $\hat{\rho}_{\sigma}\left(  \psi\sqrt{d\mathrm{vol}_{g}}\right)  .$ Since $\theta(X_{E})=2E$, we have $-\frac{i}{\hbar}\hat{E}=X_{E}+\frac{i}{\hbar}E$ whence the lift of the geodesic flow to $L$ acts on trivialized polarized sections of $L$ by $\hat{\rho}_{\sigma}(\psi)=e^{i\sigma E/\hbar}\psi\circ\pi\circ\Phi_{\sigma}$. Since $\mathcal{K}^{P^{0}}$ is trivial, $\Phi_{\sigma}^{\ast}\mathcal{K}^{P^{0}}=\mathcal{K}^{P^{\sigma}}$ is also trivial, and we let $\delta^{P^{\sigma}}$ be the trivial line bundle with trivializing section $\sqrt{\Phi_{\sigma}^{\ast}d\mathrm{vol}_{g}}$. The geodesic flow lifts to half-forms as the map $\hat{\rho}_{\sigma}\left(  \sqrt{d\mathrm{vol}_{g}}\right) =\sqrt{\Phi_{\sigma}^{\ast}d\mathrm{vol}_{g}}$. The right-hand side of (\ref{eqn:QE1}) is therefore equal to $i\hbar\left.  \frac{d}{d\sigma}\right\vert_{\sigma}I(\sigma)$,  where
\begin{equation}
\label{eqn:Isigma}
I(\sigma):=\int_{T^{\ast}M}\overline{\psi^{\prime}\circ\pi}e^{i\sigma E/\hbar}\psi\circ\pi\circ\Phi_{\sigma}\left(  \sqrt{d\mathrm{vol}_{g}},\sqrt{\Phi_{\sigma}^{\ast}d\mathrm{vol}_{g}}\right)  _{\text{BKS}}\frac{\omega^{n}}{(2\pi\hbar)^{n}n!}.
\end{equation}

The next step is to make a change of variables in $I(\sigma)$ which puts $I(\sigma)$ into a form more useful for a stationary phase analysis.

\begin{lemma}
\label{lemma:WoodhouseRescale}The integral $I(\sigma)$ may be rewritten as
\begin{equation}
\label{eqn:IsigmaRewrite}
\sigma^{-n/2}\int_{T^{\ast}M}\overline{\psi^{\prime}\circ\pi}e^{iE/\sigma\hbar}\psi\circ\pi\circ\Phi_{1}\left(  \sqrt{d\mathrm{vol}_{g}},\sqrt{\Phi_{1}^{\ast}d\mathrm{vol}_{g}}\right)  _{\text{BKS}}\frac{\omega^{n}}{(2\pi\hbar)^{n}n!}.
\end{equation}
\end{lemma}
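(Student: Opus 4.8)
The plan is to reduce the identity to a single change of variables, namely the fibrewise rescaling $m_{\lambda}\colon T^{\ast}M\to T^{\ast}M$, $m_{\lambda}(q,p)=(q,\lambda p)$, applied with $\lambda=1/\sigma$. The properties of $m_{\lambda}$ needed are all elementary: $\pi\circ m_{\lambda}=\pi$; the function $E$ is fibrewise quadratic, so $E\circ m_{\lambda}=\lambda^{2}E$; $m_{\lambda}^{\ast}\theta=\lambda\theta$, hence $m_{\lambda}^{\ast}\omega=\lambda\omega$ and $m_{\lambda}^{\ast}\varepsilon=\lambda^{n}\varepsilon$; and, crucially, the homogeneity of geodesics $\gamma_{q,\lambda v}(t)=\gamma_{q,v}(\lambda t)$, which translates into $\Phi_{\sigma}\circ m_{\lambda}=m_{\lambda}\circ\Phi_{\lambda\sigma}$, and in particular $\Phi_{\sigma}\circ m_{1/\sigma}=m_{1/\sigma}\circ\Phi_{1}$. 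For $\sigma>0$ the diffeomorphism $m_{1/\sigma}$ is orientation preserving (indeed $m_{1/\sigma}^{\ast}\varepsilon=\sigma^{-n}\varepsilon$ is a positive multiple of $\varepsilon$), so naturality of integration gives $I(\sigma)=\int_{T^{\ast}M}m_{1/\sigma}^{\ast}\Xi_{\sigma}$, where $\Xi_{\sigma}$ denotes the integrand of (\ref{eqn:Isigma}); it then remains to pull $\Xi_{\sigma}$ back factor by factor.

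The scalar factors transform at once. Since $\pi\circ m_{1/\sigma}=\pi$, the factor $\overline{\psi^{\prime}\circ\pi}$ is unchanged; since $\Phi_{\sigma}\circ m_{1/\sigma}=m_{1/\sigma}\circ\Phi_{1}$, the factor $\psi\circ\pi\circ\Phi_{\sigma}$ becomes $\psi\circ\pi\circ m_{1/\sigma}\circ\Phi_{1}=\psi\circ\pi\circ\Phi_{1}$; and since $E\circ m_{1/\sigma}=\sigma^{-2}E$, the factor $e^{i\sigma E/\hbar}$ becomes $e^{iE/\sigma\hbar}$. For the same reasons $m_{1/\sigma}^{\ast}d\mathrm{vol}_{g}=d\mathrm{vol}_{g}$ and $m_{1/\sigma}^{\ast}(\Phi_{\sigma}^{\ast}d\mathrm{vol}_{g})=(\Phi_{\sigma}\circ m_{1/\sigma})^{\ast}d\mathrm{vol}_{g}=(m_{1/\sigma}\circ\Phi_{1})^{\ast}d\mathrm{vol}_{g}=\Phi_{1}^{\ast}d\mathrm{vol}_{g}$.

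The one factor that requires care is the half-form pairing, since $m_{1/\sigma}$ does not preserve $\varepsilon$. Write $\kappa_{\sigma}:=\bigl(\sqrt{d\mathrm{vol}_{g}},\sqrt{\Phi_{\sigma}^{\ast}d\mathrm{vol}_{g}}\bigr)_{\text{BKS}}$, a function on the complement of the conjugate locus, so that the factor in question is the $2n$-form $\kappa_{\sigma}\,\varepsilon$. (Geometrically, $m_{1/\sigma}$ preserves the vertical polarization and carries $P^{\sigma}=(\Phi_{\sigma})_{\ast}P^{0}$ onto $P^{1}$, so it does intertwine the relevant half-form bundles.) Squaring and invoking (\ref{eqn:BKS}) gives $i^{n}\kappa_{\sigma}^{2}\,\varepsilon=d\mathrm{vol}_{g}\wedge\Phi_{\sigma}^{\ast}d\mathrm{vol}_{g}$, that is, $\kappa_{\sigma}^{2}=i^{-n}(d\mathrm{vol}_{g}\wedge\Phi_{\sigma}^{\ast}d\mathrm{vol}_{g})/\varepsilon$ as a function. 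Pulling back, and using the identities of the previous paragraph together with $m_{1/\sigma}^{\ast}\varepsilon=\sigma^{-n}\varepsilon$, yields $m_{1/\sigma}^{\ast}\kappa_{\sigma}^{2}=i^{-n}(d\mathrm{vol}_{g}\wedge\Phi_{1}^{\ast}d\mathrm{vol}_{g})/(\sigma^{-n}\varepsilon)=\sigma^{n}\kappa_{1}^{2}$, hence $m_{1/\sigma}^{\ast}\kappa_{\sigma}=\sigma^{n/2}\kappa_{1}$ once the sign of the square root is fixed consistently (the remark after (\ref{eqn:BKS})). Therefore $m_{1/\sigma}^{\ast}(\kappa_{\sigma}\,\varepsilon)=(\sigma^{n/2}\kappa_{1})(\sigma^{-n}\varepsilon)=\sigma^{-n/2}\kappa_{1}\,\varepsilon$, and reassembling all the factors turns $\int_{T^{\ast}M}m_{1/\sigma}^{\ast}\Xi_{\sigma}$ into precisely the right-hand side of (\ref{eqn:IsigmaRewrite}). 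We expect the bookkeeping of this last factor to be the only genuine obstacle: one must extract exactly the single power $\sigma^{n/2}$, settle the branch of the square root, and keep in mind that the identity holds a priori only off the conjugate locus — and, since $I(\sigma)$ is in any case the divergent integral one regularizes afterwards, the manipulation is to be read compatibly with that regularization, exactly as in the classical construction.
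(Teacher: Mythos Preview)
Your argument is correct and is essentially the same as the paper's: both use the fibrewise rescaling $N_{t}(q,p)=(q,tp)$ and the reparametrisation identity $\Phi_{\sigma}\circ N_{t}=N_{t}\circ\Phi_{t\sigma}$ to reduce the $\Phi_{\sigma}$-integrand to the $\Phi_{1}$-integrand, with the only nontrivial bookkeeping being the $\sigma^{n/2}$ coming from the BKS pairing against the $\sigma^{-n}$ from $\varepsilon$. The paper's proof simply states the scaling law $\bigl(\sqrt{d\mathrm{vol}_{g}},\sqrt{\Phi_{t\sigma}^{\ast}d\mathrm{vol}_{g}}\bigr)_{\text{BKS}}=t^{n/2}\,N_{t}^{\ast}\bigl(\sqrt{d\mathrm{vol}_{g}},\sqrt{\Phi_{\sigma}^{\ast}d\mathrm{vol}_{g}}\bigr)_{\text{BKS}}$ and leaves the remaining pullbacks implicit, whereas you spell out each factor; your $m_{1/\sigma}$ is exactly $N_{1/\sigma}=N_{\sigma}^{-1}$, so the two computations are the same up to whether one pulls back by $N_{\sigma}$ or by its inverse.
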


\begin{proof}
Let $N_{t}:T^{\ast}M\rightarrow T^{\ast}M$ be rescaling in the fibers by $t$. Then reparameterization of geodesics, considered as curves in $T^{\ast}M$, reads $N_{t}\circ\Phi_{t\sigma}=\Phi_{\sigma}\circ N_{t}.$ One has that $\pi\circ N_{t}=\pi$, $E\circ N_{t}=t^{2}E$, and that $N_{t}^{\ast}\theta=t\theta$ whence $N_{t}^{\ast}\omega^{n}=t^{n}\omega^{n}$. Applying this to the definition (\ref{eqn:BKS}) shows that
\[
\left(  \sqrt{d\mathrm{vol}_{g}},\sqrt{\Phi_{t\sigma}^{\ast}d\mathrm{vol}_{g}}\right)_{\text{BKS}}=t^{n/2}N_{t}^{\ast}\left(  \sqrt{d\mathrm{vol}_{g}}, \sqrt{\Phi_{\sigma}^{\ast}d\mathrm{vol}_{g}}\right)_{\text{BKS}},
\]
from which the lemma follows.
\end{proof}

\bigskip
Since we are interested in small $\sigma$, the integral $I(\sigma)$ as written above seems to lend itself to a stationary phase approximation. By (\ref{eqn:Ldecomp}), the integral $I(\sigma)$ becomes
\begin{equation}
\label{eqn:IsigmaDecomp}
\int_{M}\overline{\psi^{\prime}(q)}~\mathfrak{i}(q)~d\mathrm{vol}_{g},
\end{equation}
where, after noting that $\Phi_{1}$ is just the exponential map and using
(\ref{eqn:BKS}),
\begin{align*}
\mathfrak{i}(q)~  &  :=(2\pi\hbar)^{n/2}\int_{T_{q}^{\ast}M}e^{ig^{jk}(q)p_{j}p_{k}/2\sigma\hbar~}\psi(\pi\circ\exp_{x}(p))\\
&  \qquad\qquad\qquad\qquad\times\left(  \sqrt{\frac{\det(g_{jk}(\exp_{x}(p))}{\det(g_{jk}(x))}} \frac{d^{n}x\wedge d^{n}(\exp_{x}(p))}{i^{n}d^{n}x\wedge d^{n}p}\right)^{1/2}d^{n}p.
\end{align*}

For completeness, we will include the details of a very similar computation in Section \ref{sec:WrDP}, and so we simply note here that a stationary phase approximation of the integral $\mathfrak{i}(q)$ near the zero section yields an asymptotic expansion $\mathfrak{i}(q)\sim a_{0}+a_{1}\sigma+O(\sigma^{2}),\ \sigma\rightarrow0,$ with
\begin{align*}
a_{0}(q)  &  =\frac{1}{i\hbar}\psi(q)\text{, and}\\
a_{1}(q)  &  =\frac{i\hbar}{2}\left(  \Delta-\frac{S(q)}{6}\right)  \psi(q),
\end{align*}
where $\Delta$ is the metric Laplacian and $S$ is the scalar curvature of $g$. By (\ref{eqn:QE1}), we conclude that $Q(E)=-\frac{\hbar^{2}}{2}\left(\Delta-\frac{S}{6}\right).$ We refer the reader to \cite[Sec. 9.7]{Woodhouse} or \cite[Sec. 6.3,7.1]{SniatyckiGQ} for details of this computation.

\subsection{Convergence difficulties\label{sec:probs}}

There are several analytical difficulties with this construction of the dragging projection which require either the addition of hypotheses or a more involved analysis of the convergence properties of various integrals.

First of all, in our treatment of the BKS pairing, we have implicitly assumed that the pairing $\left(\sqrt{d\mathrm{vol}_{g}},\sqrt{\Phi_{1}^{\ast}d\mathrm{vol}_{g}}\right)$ is a well-defined smooth function, which is only the case if $\left((\Phi_{\sigma})_{\ast}P^{0}\right)$ is everywhere transverse to $P^{0}$. This is certainly not the case in general; indeed, if $v\in\mathrm{Vert}_{\Phi_{-t}(z)}(T^{\ast}M)$, then $\pi_{\ast}(\Phi_{t})_{\ast}v$ is a Jacobi field along the geodesic $\gamma_{z}$ in $M$ determined by $z$. This Jacobi field is zero at points where the image of $v$ under the pushforward by the geodesic flow is vertical. In particular, $(\Phi_{\sigma})_{\ast}P_{\Phi_{-\sigma}(z)}^{0}\cap P_{z}^{0}$ is nonzero if $\pi(\Phi_{-\sigma}(z))$ and $\pi(z)$ are conjugate points on $\gamma_{z}.$

On the other hand, one could argue that since the set of points where $(\Phi_{\sigma})_{\ast}P_{\Phi_{-\sigma}(x)}^{0}\cap P_{x}^{0}\neq\{0\}$ is certainly measure zero, it can be ignored when computing the integrals in (\ref{eqn:QE1}). One would then have to check that the half-form pairing remains bounded near such points, and maybe worry very slightly about taking the derivative with respect to $\sigma$ of a family of integrals whose domains depend on $\sigma$.

A more serious problem with the standard dragging projection is the convergence of $I(\sigma)$: since $\psi$ is a function on the compact manifold $M$, $\psi\circ\pi\circ\Phi_{1}(x,p)$ does not go to zero for large $p$, and so there is no reason that the integral $\mathfrak{i}(q)$ over the fiber $T_{q}^{\ast}M$ should converge. One might hope that the BKS contribution controls the divergence, but even in the simplest case $M=S^{1}$, the integral $I(\sigma)$ is
\[
\frac{1}{\sqrt{2i\sigma}}\int_{S^{1}}\bar{\psi}^{\prime}(q)\left(\int_{\mathbb{R}}\psi(q+\sigma p)e^{ip^{2}/2\sigma}dp\right)  dq,
\]
and the integral over $\mathbb{R}$ simply does not converge (as a Lebesgue integral, at least).

The standard remedy is to introduce a cutoff $R>0$, compute the integral over
\[
T^{\ast,R}M:=\{(q,p):\left\vert p\right\vert <R\},
\]
and then let $R\rightarrow\infty.$ But since we are actually interested in the derivative at $\sigma=0$ of $I(\sigma)$, one must be careful about interchanging the $R\rightarrow\infty$ and $\sigma\rightarrow0$ limits.

The final, and perhaps most serious, difficulty lies with the application of the method of stationary phase. In order to rigorously apply the stationary phase approximation, we must first introduce a cutoff $R>0$ and show that part of $I(\sigma)$ given by the integral over $T^{\ast,>R}M:=\{(q,p):\left\vert p\right\vert >R\}$ is $O(\sigma^{1+\varepsilon})$ for some $\varepsilon>0$ (so that the derivative at $0$ is equal to $0$). But as we explained above, this part of $I(\sigma)$ does not even converge, so we must introduce another cutoff $R^{\prime}>R>0$ to regularize, then show that the $R^{\prime}\rightarrow\infty$ limit yields something of order $O(\sigma^{1+\varepsilon})$ as $\sigma\rightarrow0$.

It is likely that with sufficient patience and analytic skill, all of these subtleties can be dealt with, but we will take a different approach which simultaneously avoids all of these problems.

\section{Wick rotation}
\label{sec:wr}

Our basic idea is a common one in physics: we will do a so-called Wick rotation. Consider, for example, the standard Gaussian integral
\begin{equation}
\label{eqn:Gaussian}
\int_{\mathbb{R}^{2}}e^{-\tau(x^{2}+y^{2})}dx\,dy.
\end{equation}
For $\tau$ real and positive, one may evaluate the integral exactly to obtain $\pi/\tau$. Away from $\tau=0$, this is analytic in $\tau$, and we can analytically continue. In particular, for $\tau=-i\sigma,$ we obtain the formal expression
\begin{equation}
\int_{\mathbb{R}^{2}}e^{i\sigma(x^{2}+y^{2})}dx\,dy=i\pi/\sigma.
\label{eqn:Wick1}
\end{equation}
Of course, this integral does not converge (at least, as a Lebesgue integral), but introducing a cutoff, computing, and taking the limit as the cutoff goes to infinity yields exactly the value $i\pi/\sigma$ (alternatively, we can just interpret the integral as a Riemannian integral). On the other hand, if we are unwilling or unable to evaluate (\ref{eqn:Wick1}), we apply the procedure in reverse, that is, replace
\begin{equation}
\label{eqn:WickRotation}
\sigma\mapsto it
\end{equation}
in (\ref{eqn:Wick1}) to obtain the Gaussian integral (\ref{eqn:Gaussian}). The replacement (\ref{eqn:WickRotation}), which amounts to rotating the time variable through an angle $\pi/2$ in the complex plane, is called a \emph{Wick rotation}.

Doing a Wick rotation $\sigma\mapsto it$ to the integral $I(\sigma)$ given by (\ref{eqn:Isigma}) and applying the same rescaling argument of Lemma \ref{lemma:WoodhouseRescale} yields the integral
\begin{equation}
\label{eqn:WickJ}
J(t)=t^{-n/2}\int_{T^{\ast}M}\overline{\psi^{\prime}\circ\pi}~\psi\circ\pi\circ\Phi_{i}~e^{-E/t\hbar}\left(  \sqrt{d\mathrm{vol}_{g}},\sqrt{\Phi_{i}^{\ast}d\mathrm{vol}_{g}}\right)  _{\text{BKS}}\frac{\omega^{n}}{(2\pi\hbar)^{n}n!}.
\end{equation}
This integral is superficially much easier to deal with since the exponential is now a Gaussian in the fibers of $T^{\ast}M$. The price we pay, however, is that we must somehow interpret the ``imaginary-time'' geodesic flow. In particular, we must make sense of functions of the form
\[
\psi\circ\pi\circ\Phi_{i}%
\]
and of the $n$-form $\Phi_{i}^{\ast}d\mathrm{vol}_{g}$. Fortunately, this has already been done and involves the notion of adapted complex structures.

\section{Adapted complex structures\label{sec:acs}}

Recall that we denote the time-$\sigma$ geodesic flow by $\Phi_{\sigma}:T^{\ast}M\rightarrow T^{\ast}M$, and that this is the flow of the Hamiltonian vector field $X_{E}$ of the kinetic energy function $E(q,p):=\frac{1}{2}g_{q}(p,p).$

There are several ways to understand the analytic continuation of the geodesic flow. The most direct way to analytically continue the geodesic flow on $M$ is to regard $M$ as sitting inside a Bruhat--Whitney/Grauert complexification $X$ and then literally analytically continue $\pi\circ\Phi$ to a map from $D\times M$ into $X$ for some disk $D\subset\mathbb{C}$.

In \cite{Lempert-Szoke,Szoke} and \cite{Guillemin-Stenzel1,Guillemin-Stenzel2}, Lempert and Sz\H{o}ke and, independently and essentially simultaneously, Guillemin and Stenzel realized that with the help of the metric $g$, a neighborhood of $M$ in $X$ can be identified with some tubular neighborhood $T^{\ast,R}M$ of $M$ in its cotangent bundle. The pushforward of the complex structure on $X$ by this identification yields a complex structure on $T^{\ast,R}M,$ called the \emph{adapted complex structure}, which is K\"{a}hler with respect to the $\omega^{T^{\ast}M}$.

Theorem \ref{thm:ACS} below, due to the author and B. Hall, repackages the construction of the adapted complex structure explicitly in terms of the geodesic flow and avoids completely the introduction of the abstract complexification $X$ (although our description below in terms of the geodesic flow is contained implicitly, in the original works of Lempert--Sz\H{o}ke and Guillemin--Stenzel), and yields exactly what we need to make sense of the various terms in (\ref{eqn:WickJ}) arising from the Wick rotation.

Since we are interested in how the geodesic flow acts on vertically polarized sections, and on the vertical polarizations itself, we begin with a formulation in terms of the vertical polarization.

\begin{theorem}
\label{thm:ACS}
For every $\varepsilon>0$ there exists $R>0$ such that for all $x\in T^{\ast,R}M$, the map
\[
\sigma\in\mathbb{R\mapsto}\left(  \Phi_{\sigma}\right)  _{\ast}\mathrm{Vert}_{x}^{\mathbb{C}}T^{\ast}M\subset T_{x}^{\mathbb{C}}(T^{\ast}M)
\]
can be analytically continued to the disk $D_{1+\varepsilon}:=\{\sigma +i\tau\in\mathbb{C}:\left\vert \sigma+it\right\vert <\varepsilon\}.$ Moreover, for all $t\in(0,1+\varepsilon),$ the distribution $(\Phi_{it})_{\ast}\mathrm{Vert}^{\mathbb{C}}(T^{\ast,R}M)$ is the $(1,0)$-tangent bundle for a complex structure $J_{t}$ such that $(T^{\ast,R}M,\omega^{T^{\ast}M},J_{t})$ is K\"{a}hler with global K\"{a}hler potential $\kappa=t\left\vert p\right\vert^{2}.$
\end{theorem}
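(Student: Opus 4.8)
The plan is to separate the statement into a soft part---existence of the holomorphic continuation to $D_{1+\varepsilon}$---and the substantive part---that evaluating at $\sigma=it$ produces a K\"{a}hler structure with the asserted potential---and to reduce the latter, via the fibre rescaling $N_{t}$, to the single value $t=1$, which is the adapted complex structure of Guillemin--Stenzel/Lempert--Sz\H{o}ke.

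For the continuation: since $g$ is real-analytic, the geodesic flow $\Phi:\mathbb{R}\times T^{\ast}M\to T^{\ast}M$ is jointly real-analytic, hence so is the induced flow on the total space of the complex Lagrangian Grassmannian bundle $\mathcal{L}_{\mathbb{C}}(T^{\ast}M)$, whose fibres are complex manifolds. For each real $\sigma$ the assignment $x\mapsto(\Phi_{\sigma})_{\ast}\mathrm{Vert}_{\Phi_{-\sigma}(x)}^{\mathbb{C}}$ is a Lagrangian distribution on $T^{\ast}M$ whose value at a fixed $x$ is a point of the fixed complex manifold $\mathcal{L}_{\mathbb{C}}(T_{x}^{\ast}M)$, so real-analyticity in $\sigma$ yields a holomorphic continuation of $\sigma\mapsto(\Phi_{\sigma})_{\ast}\mathrm{Vert}^{\mathbb{C}}$ to some disk about $0$. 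To make the radius uniformly exceed $1+\varepsilon$ I would combine (i) joint real-analyticity together with compactness of a closed tube $\overline{T^{\ast,R_{0}}M}$, furnishing a uniform radius $\rho_{0}>0$ there, with (ii) the rescaling identity $N_{r}\circ\Phi_{r\sigma}=\Phi_{\sigma}\circ N_{r}$ from the proof of Lemma~\ref{lemma:WoodhouseRescale} and the fact that $(N_{r})_{\ast}$ preserves $\mathrm{Vert}^{\mathbb{C}}$, which together show the radius of continuation at $N_{r}x$ is $1/r$ times that at $x$. Shrinking the tube (taking $R$ small, depending on $\varepsilon$) then pushes the radius past $1+\varepsilon$ at every point of $T^{\ast,R}M$; the zero section, where the linearized flow is entire in $\sigma$, is harmless.

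For the content at $\sigma=it$ write $P_{t}:=(\Phi_{it})_{\ast}\mathrm{Vert}^{\mathbb{C}}(T^{\ast,R}M)$, a real-analytic complex rank-$n$ distribution by the previous step. That $P_{t}$ is \emph{Lagrangian} is persistence of a closed (holomorphic) condition: the defining equations of Lagrangian $n$-planes are holomorphic, $\mathrm{Vert}^{\mathbb{C}}$ satisfies them, each $\Phi_{\sigma}$ ($\sigma$ real) is a symplectomorphism and preserves them, so they hold throughout $D_{1+\varepsilon}$. That $P_{t}$ is \emph{integrable} follows by the same principle applied to the Frobenius obstruction $(X,Y)\mapsto[X,Y]\bmod P$: it is tensorial, a section of $\bigwedge^{2}P^{\ast}\otimes(T^{\mathbb{C}}/P)$, depends holomorphically on $\sigma$, and vanishes for real $\sigma$ (where $P_{\sigma}$ is a diffeomorphic image of the integrable $\mathrm{Vert}^{\mathbb{C}}$), hence vanishes identically. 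For positivity of $P_{t}$ and compatibility with $\omega^{T^{\ast}M}$ I would descend to $t=1$: the rescaling identity with $\sigma=i$ gives $\Phi_{it}=N_{1/t}\circ\Phi_{i}\circ N_{t}$, hence $P_{t}=N_{t}^{\ast}P_{1}$, so $J_{t}:=N_{t}^{\ast}J_{1}$ is a genuine integrable complex structure with $(1,0)$-bundle $P_{t}$; since $N_{t}^{\ast}\omega^{T^{\ast}M}=t\,\omega^{T^{\ast}M}$ and $t>0$, positivity of $P_{1}$ transfers to $P_{t}$; and if $\omega^{T^{\ast}M}=c\,\partial_{J_{1}}\bar{\partial}_{J_{1}}(|p|^{2})$ for a fixed constant $c$ with $|p|^{2}$ strictly $J_{1}$-plurisubharmonic, then pulling back by the biholomorphism $N_{t}$ and using $N_{t}^{\ast}(|p|^{2})=t^{2}|p|^{2}$ gives $\omega^{T^{\ast}M}=c\,\partial_{J_{t}}\bar{\partial}_{J_{t}}(t|p|^{2})$ with $t|p|^{2}$ strictly $J_{t}$-plurisubharmonic---exactly the assertion. (The final $R$ must also lie inside the tube on which $J_{1}$, hence $J_{t}$ for $t<1+\varepsilon$, is defined.)

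The hard part, and the real content, is the $t=1$ case: that $(\Phi_{i})_{\ast}\mathrm{Vert}^{\mathbb{C}}$ is a \emph{positive} polarization with global K\"{a}hler potential $|p|^{2}=2E$. One may import this: the Guillemin--Stenzel/Lempert--Sz\H{o}ke adapted complex structure on a tube is K\"{a}hler with potential $|p|^{2}$ by construction, and its $(1,0)$-bundle is $(\Phi_{i})_{\ast}\mathrm{Vert}^{\mathbb{C}}$ because the complexified geodesic $\sigma\mapsto\Phi_{\sigma}(x)$ parametrizes a holomorphic disk in the associated Bruhat--Whitney/Grauert tube whose tangent directions are precisely the $(1,0)$-vectors. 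A self-contained route is a direct Jacobi-field computation in normal coordinates: expand $(\Phi_{\sigma})_{\ast}\partial_{p_{j}}$ and check that at $\sigma=it$ the flat model $z^{j}=x^{j}+itp_{j}$ is corrected only by curvature terms which leave $-2i\omega(Z,\bar{Z})$ positive and the potential equal to $t|p|^{2}$. I expect the positivity of $P_{t}$---unlike the Lagrangian and integrability properties, an \emph{open} condition that cannot be obtained by analytic persistence from the real axis---to be the main obstacle.
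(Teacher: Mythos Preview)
The paper does not prove this theorem; it is quoted from the author's joint work with Hall \cite{Hall-K_acs} and attributed to Guillemin--Stenzel and Lempert--Sz\H{o}ke for the $t=1$ case. The only argument the present paper offers is the one-line remark immediately following the theorem: ``The `time-$it$' adapted complex structures can be obtained from the adapted complex structure by rescaling the fibers of $T^{\ast}M$ by $1/t$ (this boils down to reparamerization of geodesics).'' Your sketch is therefore not competing with a proof in the paper but is supplying one, and the core of your reduction---pulling back $J_1$ along $N_t$ via the identity $N_t\circ\Phi_{t\sigma}=\Phi_\sigma\circ N_t$---is exactly the mechanism the paper alludes to.

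On the substance of your sketch: the soft parts (uniform radius via compactness plus rescaling, persistence of the Lagrangian and Frobenius conditions by analytic continuation of holomorphic identities) are sound and standard. Your observation that positivity is an \emph{open} rather than closed condition, and therefore cannot be propagated from the real axis by analytic continuation, is the right diagnosis; importing it from the $t=1$ Guillemin--Stenzel/Lempert--Sz\H{o}ke structure is precisely what the cited literature does. One small point to tidy: be careful with the direction of the rescaling. From $N_t\circ\Phi_{it}=\Phi_i\circ N_t$ one gets $(\Phi_{it})_\ast\mathrm{Vert}^{\mathbb{C}}=(N_t)^{-1}_\ast(\Phi_i)_\ast(N_t)_\ast\mathrm{Vert}^{\mathbb{C}}=(N_{1/t})_\ast P_1$, so $J_t$ is the pullback of $J_1$ by $N_t$ (equivalently the pushforward by $N_{1/t}$), matching the paper's ``rescaling by $1/t$''; your formula $J_t=N_t^\ast J_1$ says the same thing, but make sure the domains line up---$J_1$ lives on a tube of radius $R_0$, so $J_t$ lives on the tube of radius $R_0/t$, which for $t\in(0,1+\varepsilon)$ covers $T^{\ast,R_0/(1+\varepsilon)}M$.
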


The above theorem tells us in particular that for some $R>0$, it makes sense to ``plug in $t=\sqrt{-1}$''. The resulting complex structure is the adapted complex structure of Guillemin--Stenzel/Lempert--Sz\H{o}ke. The ``time-$it$'' adapted complex structures can be obtained from the adapted complex structure by rescaling the fibers of $T^{\ast}M$ by $1/t$ (this boils down to reparamerization of geodesics). In our application, we will be interested in small $t$ and hence large $R$. Theorem \ref{thm:ACS} can be restated in terms of $J_{t}$-holomorphic functions as follows.

\begin{theorem}
For $r\in(0,R),$ a function $f_{\mathbb{C}}:T^{\ast,r}M\rightarrow\mathbb{C}$ is $J_{t}$-holomorphic if and only if there exists a real-analytic function $f:M\rightarrow\mathbb{C}$ such that for all $x\in T^{\ast,r}M$
\[
f_{\mathbb{C}}(x)=f\circ\pi\circ\Phi_{it}(x),
\]
where the right-hand side of the above equation is understood to be the evaluation at $\sigma=it$ of the analytic continuation of the map $\sigma \in\mathbb{R}\mapsto f\circ\pi\circ\Phi_{\sigma}(x)$ to $D_{\varepsilon}.$
\end{theorem}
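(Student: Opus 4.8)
The plan is to reduce everything to the defining property of $J_t$ supplied by Theorem \ref{thm:ACS}: the $(1,0)$-tangent bundle of $J_t$ on $T^{\ast,r}M$ is the pushforward polarization $P^{it}:=(\Phi_{it})_{\ast}P^{0}$, where $P^{0}=\mathrm{Vert}^{\mathbb{C}}(T^{\ast}M)$. First I would record that the conjugate bundle $\overline{P^{it}}$, i.e.\ the $(0,1)$-tangent bundle of $J_t$, equals $(\Phi_{-it})_{\ast}P^{0}$: since $P^{0}$ is the complexification of a real distribution and $\Phi_{\sigma}$ is a real map for $\sigma\in\mathbb{R}$, complex conjugation intertwines the analytic continuation $\sigma\mapsto(\Phi_{\sigma})_{\ast}P^{0}$ with its reflection $\sigma\mapsto(\Phi_{\bar\sigma})_{\ast}P^{0}$, and at $\sigma=it$ this reads $\overline{P^{it}}=(\Phi_{-it})_{\ast}P^{0}$. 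Hence a smooth function $f_{\mathbb{C}}$ on $T^{\ast,r}M$ is $J_t$-holomorphic exactly when $Z f_{\mathbb{C}}=0$ for every local section $Z$ of $(\Phi_{-it})_{\ast}P^{0}$.

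Next I would change variables by the analytically continued geodesic flow. Writing such a $Z$ as $Z=(\Phi_{-it})_{\ast}w$ with $w$ vertical, the chain rule gives $Z(h)=w(h\circ\Phi_{-it})$ for any function $h$; since $\Phi_{it}\circ\Phi_{-it}=\Phi_{0}=\mathrm{id}$, the assignment $h\mapsto h\circ\Phi_{-it}$ is a bijection carrying ``annihilated by $(\Phi_{-it})_{\ast}P^{0}$'' to ``annihilated by $P^{0}$'', i.e.\ ``constant along the (connected) fibres of $\pi$''. Therefore $f_{\mathbb{C}}$ is $J_t$-holomorphic iff $f_{\mathbb{C}}\circ\Phi_{-it}=f\circ\pi$ for some function $f$ on $M$, equivalently $f_{\mathbb{C}}=f\circ\pi\circ\Phi_{it}$. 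Restricting to the zero section, which $\Phi_{\pm it}$ fix, identifies $f=f_{\mathbb{C}}|_{M}$, and $f$ is real-analytic because $f_{\mathbb{C}}$ is real-analytic on $T^{\ast}M$ (being holomorphic for the real-analytic complex structure $J_t$) and $M\hookrightarrow T^{\ast,r}M$ is real-analytic. The converse is this computation in reverse: for real-analytic $f$ on $M$ the function $f\circ\pi$ is vertically constant, so $Z(f\circ\pi\circ\Phi_{it})=w\big((f\circ\pi\circ\Phi_{it})\circ\Phi_{-it}\big)=w(f\circ\pi)=0$ for every $Z$ as above, showing $f\circ\pi\circ\Phi_{it}$ is $J_t$-holomorphic; it is defined on all of $T^{\ast,r}M$ for $r<R$ because, by Theorem \ref{thm:ACS} and the analytic-continuation framework recalled above, $\sigma\mapsto\pi\circ\Phi_{\sigma}(x)$ extends to $D_{\varepsilon}$ for $x$ in a tube and the real-analytic $f$ extends holomorphically past $M$.

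The one real obstacle is rigour: $\Phi_{it}$ and $\Phi_{-it}$ are not honest self-maps of $T^{\ast,r}M$ but analytic continuations in the time parameter, so the change of variables, the identity $Z(h)=w(h\circ\Phi_{-it})$, and the group law $\Phi_{it}\circ\Phi_{-it}=\mathrm{id}$ all need justification. I would supply this either (i) by working in the Grauert/Bruhat--Whitney complexification $X\supset M$, where $\pi\circ\Phi_{it}$ is a genuine biholomorphism from a tube in $T^{\ast}M$ onto a neighbourhood of $M$ in $X$ carrying $J_t$ to the complex structure of $X$, so that ``$J_t$-holomorphic'' becomes ``holomorphic on $X$'' and every manipulation above is ordinary calculus; or (ii) by a power-series argument, noting that the two sides of each asserted identity are values at $\sigma=it$ of maps holomorphic in $\sigma$ on $D_{\varepsilon}$ that agree for small real $\sigma$, where they are the standard identities for the geodesic flow, hence agree throughout $D_{\varepsilon}$. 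In either case what remains is routine bookkeeping of domains. Finally, one can shortcut the ``$J_t$-holomorphic $\Rightarrow$'' direction altogether: with $f:=f_{\mathbb{C}}|_{M}$, the function $f\circ\pi\circ\Phi_{it}$ is $J_t$-holomorphic by the computation above and agrees with $f_{\mathbb{C}}$ on the totally real zero section $M$, so by the identity theorem for holomorphic functions near a maximal totally real submanifold the two coincide everywhere.
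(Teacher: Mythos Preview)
The paper does not actually prove this theorem. It is stated as a reformulation of Theorem~\ref{thm:ACS} (``Theorem~\ref{thm:ACS} can be restated in terms of $J_t$-holomorphic functions as follows''), with the implicit understanding that the details are in the cited work \cite{Hall-K_acs}; the only supplementary remark is that $f$ is recovered as $f_{\mathbb{C}}$ restricted to the zero section.

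Your argument is sound and matches the intended mechanism. The identification $\overline{P^{it}}=(\Phi_{-it})_{\ast}P^{0}$ via conjugation of the analytic family, the reduction of ``$J_t$-holomorphic'' to ``$f_{\mathbb{C}}\circ\Phi_{-it}$ is fibrewise constant'', and the recovery of $f$ on the zero section are exactly the right moves, and your totally-real identity-theorem shortcut for the forward direction is a clean way to finish. You are also correct that the only genuine subtlety is that $\Phi_{\pm it}$ are not self-maps of $T^{\ast,r}M$; both of your proposed fixes---passing to the Bruhat--Whitney/Grauert complexification $X$ where $\pi\circ\Phi_{it}$ becomes an honest biholomorphism, or invoking the identity principle in the $\sigma$-variable---are standard and are precisely how \cite{Hall-K_acs} and the Lempert--Sz\H{o}ke/Guillemin--Stenzel papers handle such manipulations. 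So your proposal is correct and, in effect, supplies the proof the paper omits.
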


Since the zero section consists of fixed points of the geodesic flow, the function $f$ in this theorem is simply $f_{\mathbb{C}}$ evaluated on the zero section. This theorem can also be stated at the level of holomorphic sections of our prequantum bundle $L\rightarrow T^{\ast}M$. The lift $V_{E}$ of the vector field $X_{E}$ to the total space of $L$ is $V_{E}=X_{E}^{hor}+(E-\theta(X_{E}))iz\partial_{z}$ (c.f. (\ref{eqn:Vf})). The flow $\Phi^{L}$ of $V_{E}$ is the lift of $\Phi$ to $L$ and induces an action on sections given by
\[
(\rho_{\sigma}s)(x):=\Phi_{-\sigma}^{L}(s(\Phi_{\sigma}x)),\quad s\in\Gamma(L).
\]

Since $L$ is a Hermitian line bundle with connection, the complex structure $J_{t}$ induces a holomorphic structure on $L$ which, for example, can be described by declaring that a section $s\in\Gamma_{T^{\ast,R}M}(L)$ is holomorphic if
\[
\nabla_{(1+iJ_{t})X}s=0
\]
for all vector fields $X.$

The next theorem is a corollary to Theorem \ref{thm:ACS}.

\begin{theorem}
With respect to the trivialization $\nabla=d+\frac{i}{\hbar}\theta$, a section $s\in\Gamma_{T^{\ast,r}M}(L)$ (here, $r\leq R$) is $J_{t}$-holomorphic if and only if there exists a real-analytic function $\psi:M\rightarrow\mathbb{C}$, and hence a $J_{t}$-holomorphic function $\psi\circ\pi\circ\Phi_{it}$, such that for all $x\in T^{\ast,r}M$, \[
s(x)=\psi\circ\pi\circ\Phi_{it}(x)~e^{-tE(x)/\hbar}.
\]
\end{theorem}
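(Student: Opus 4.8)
The plan is to deduce this from the preceding theorem on $J_t$-holomorphic \emph{functions} by showing that, in the given trivialization, $e^{-tE/\hbar}$ is a global, nowhere-vanishing $J_t$-holomorphic section of $L$ over $T^{\ast,r}M$ -- a holomorphic frame. Granting this, both implications are immediate. If $s=(\psi\circ\pi\circ\Phi_{it})\,e^{-tE/\hbar}$ with $\psi\colon M\to\mathbb{C}$ real-analytic, then by the preceding theorem $\psi\circ\pi\circ\Phi_{it}$ is a $J_t$-holomorphic function, and the product of a $J_t$-holomorphic function with a $J_t$-holomorphic section is again $J_t$-holomorphic. Conversely, if $s$ is $J_t$-holomorphic, then $e^{tE/\hbar}s=s/(e^{-tE/\hbar})$ is a quotient of a holomorphic section by a nowhere-vanishing holomorphic section, hence a $J_t$-holomorphic function; by the preceding theorem it equals $\psi\circ\pi\circ\Phi_{it}$ for some real-analytic $\psi\colon M\to\mathbb{C}$, so $s=(\psi\circ\pi\circ\Phi_{it})\,e^{-tE/\hbar}$.

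So everything comes down to the frame claim, and the natural proof runs parallel to the preceding theorems, i.e.\ it is again a corollary of Theorem \ref{thm:ACS}. Recall from Section \ref{sec:dragproj} that on trivialized sections polarized along the vertical polarization $P^0$ the lifted geodesic flow acts by $\rho_\sigma(\psi\circ\pi)=e^{i\sigma E/\hbar}\,\psi\circ\pi\circ\Phi_\sigma$, and that (being essentially pullback by the lift of the symplectomorphism $\Phi_\sigma$) $\rho_\sigma$ carries $\mathcal{H}_{P^0}$ into $\mathcal{H}_{P^\sigma}$: it preserves the polarization condition, transporting it by $\Phi_\sigma$. Taking $\psi\equiv 1$ gives $\rho_\sigma(1)=e^{i\sigma E/\hbar}$, and for each fixed $X$ the statement ``$\nabla_X(e^{i\sigma E/\hbar})=0$ for $X$ in the pushforward of $\mathrm{Vert}^{\mathbb{C}}$ by $\Phi_\sigma$'' is analytic in $\sigma$ and holds for real $\sigma$. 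By Theorem \ref{thm:ACS} this family of distributions, together with the sections $e^{i\sigma E/\hbar}$, extends analytically in $\sigma$; evaluating the identity at $\sigma=it$ and again invoking Theorem \ref{thm:ACS} (which identifies the continued distribution at $\sigma=it$ with the holomorphic tangent bundle of $J_t$) shows that $e^{-tE/\hbar}$ lies in $\mathcal{H}_{P^{it}}$, i.e.\ is $J_t$-holomorphic; it is manifestly global and nowhere vanishing on $T^{\ast,r}M$. The same argument with a general real-analytic $\psi$ gives the ``if'' direction directly.

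For concreteness, and to make the constants visible, the frame claim can also be checked by hand. In the trivialization, $\nabla^{0,1}_{J_t}(e^{-tE/\hbar})=0$ is equivalent to the $1$-form $i\theta-t\,dE$ being of type $(1,0)$ for $J_t$, i.e.\ to $\theta^{0,1}_{J_t}=-it\,\bar\partial_{J_t}E$, equivalently to the identity $\theta=\tfrac{i}{2}(\partial_{J_t}-\bar\partial_{J_t})\kappa$ with $\kappa=t|p|^2=2tE$. Because $-d\theta=\omega^{T^\ast M}$ and $\kappa$ is a global K\"ahler potential for $(\omega^{T^\ast M},J_t)$ by Theorem \ref{thm:ACS}, the two sides of this identity differ by a closed real $1$-form that vanishes along the zero section (where $\theta=p_j\,dx^j$ vanishes and $\kappa$ has a critical point); that this closed form is identically zero follows, e.g., from the defining property of the adapted complex structure that $J_t$ sends the geodesic vector field $X_E$ to a constant multiple of the fiberwise Euler vector field $p_j\partial_{p_j}$ (which, via $\iota_{p_j\partial_{p_j}}\omega^{T^\ast M}=-\theta$, is precisely this identity), or by pulling everything back along the fiberwise rescalings $N_\lambda$, which intertwine the $J_t$ for different $t$ and so reduce the claim to a single value of $t$ (e.g.\ the flat model, where it is a direct computation).

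The main obstacle is the frame claim, and within it the bookkeeping of the analytic continuation: one must correctly carry the prefactor $e^{i\sigma E/\hbar}$ through, and -- crucially -- observe that continuing the polarization condition to $\sigma=it$ produces the $(0,1)$-polarization of $J_t$ rather than the $(1,0)$-one, so that one genuinely lands on holomorphic (not anti-holomorphic) sections; this is where the analytic continuation of a \emph{real} flow, evaluated over a base point $x$ that stays real, does its work. All genuinely analytic input -- that the relevant families extend holomorphically in $\sigma$, and that $J_t$ is a bona fide K\"ahler structure -- is already supplied by Theorem \ref{thm:ACS} and the preceding theorem, so what remains is essentially formal.
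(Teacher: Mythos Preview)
The paper does not actually prove this theorem; it merely records it as ``a corollary to Theorem~\ref{thm:ACS}'' (the result is from \cite{Hall-K_acs}). So there is no proof in the paper to compare against. Your strategy---reduce to the previous theorem by exhibiting $e^{-tE/\hbar}$ as a global nowhere-vanishing $J_t$-holomorphic frame---is exactly the right one and makes the corollary transparent.

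That said, your first argument has a sign glitch that you should repair rather than merely flag. A direct computation with $\Phi_\sigma^*\theta=\theta+\sigma\,dE$ (use $\mathcal L_{X_E}\theta=dE$, which follows from $\theta(X_E)=2E$ and $\iota_{X_E}\omega=dE$) shows that for real $\sigma$ the section $e^{i\sigma E/\hbar}$ is covariantly constant along $(\Phi_{-\sigma})_*\mathrm{Vert}^{\mathbb C}$, \emph{not} along $(\Phi_{\sigma})_*\mathrm{Vert}^{\mathbb C}$ as you write; equivalently, it is the $1$-form $\theta-\sigma\,dE$ (not $\theta+\sigma\,dE$) that annihilates $(\Phi_\sigma)_*\mathrm{Vert}^{\mathbb C}$. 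Analytically continuing the correct statement to $\sigma=it$ then gives $\nabla_X(e^{-tE/\hbar})=0$ for $X\in(\Phi_{-it})_*\mathrm{Vert}^{\mathbb C}=\overline{P^{it}}=T^{0,1}_{J_t}$, which is precisely $J_t$-holomorphicity. So your conclusion (``continuing the polarization condition to $\sigma=it$ produces the $(0,1)$-polarization'') is right, but with your stated intermediate claim it would produce the $(1,0)$-polarization instead; the fix is the sign of $\sigma$ in the pushforward. (The paper's informal assertion that $\hat\rho_\sigma$ lands in $\mathcal H_{P^\sigma}$ with $P^\sigma=(\Phi_\sigma)_*P^0$ is a bit loose on this point, which may be the source of the confusion.)

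Your second argument, via the K\"ahler potential, is a clean alternative: the identity $\theta=\tfrac{i}{2}(\partial_{J_t}-\bar\partial_{J_t})\kappa$ with $\kappa=2tE$ is exactly the statement that $i\theta-t\,dE$ is of type $(1,0)$, and once you have it the frame claim is immediate. The quickest way to nail down that the closed discrepancy form vanishes is the computation above: $(\theta-\sigma\,dE)|_{(\Phi_\sigma)_*\mathrm{Vert}^{\mathbb C}}=0$ for real $\sigma$, hence by analytic continuation $(\theta-it\,dE)|_{P^{it}}=0$, which says $\theta-it\,dE$ is type $(0,1)$ and therefore $i\theta-t\,dE=i\,\overline{(\theta-it\,dE)}$ is type $(1,0)$.
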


\section{Quantization via Wick-rotated ``dragging projection''}
\label{sec:WrDP}

We are now ready to quantize the geodesic flow via a Wick-rotated dragging projection. Since we are unaware of a detailed analysis of even the standard dragging projection in the literature, we will include the details of the computation, despite the fact that several of the steps mirror closely what appears in \cite{Woodhouse} and \cite{SniatyckiGQ}. In particular, the rescaling argument of Lemma \ref{lemma:WoodhouseRescale} essentially appears in \cite[page 202 -- 203]{Woodhouse}, and the computations involved in the Laplace's approximation which we make below are formally similar to the stationary phase computations which appear in \cite[Sec. 9.7]{Woodhouse} and \cite[Sec. 6.3,7.1]{SniatyckiGQ}.

\bigskip
Fix $\varepsilon>0$ and let $R$ be as in Theorem \ref{thm:ACS} so that for any $t\in(0,1+\varepsilon)$, the time-$it$ adapted complex structure $J_{t}$ exists on $T^{\ast,R}M$. Strictly speaking, we should assume that $(M,g)$ is such that we can take $R=\infty$ since the integral in (\ref{eqn:WickJ}) is over $T^{\ast}M$. However, Theorem \ref{thm:tails} below shows that for small $t$, the integrand concentrates exponentially onto the zero section for small $t$, and so there is no harm in making the weaker assumption that $J_{t}$ exists only on a finite-radius tubular neighborhood of the zero section.

Since we Wick rotate the arguments given in Section \ref{sec:dragproj}, we should be careful that the usual time is now replace by $it$ in definition (\ref{eqn:QE1}). That is, suppose that $\psi\in C^{\omega}(M)$ admits a $J_{t}$-analytic continuation to $T^{\ast,r}M$ for some $r\in(0,R)$. Then via a Wick-rotated dragging projection, the quantization of the geodesic flow acting on $\psi\in L^{2}(M,d\mathrm{vol}_{g})$ is defined by
\begin{multline}
\label{eqn:QE2}
\left\langle \psi^{\prime},\tilde{Q}(E)\psi\right\rangle _{L^{2}(M,d\mathrm{vol}_{g})}  :=\hbar\left.  \frac{d}{dt}\right\vert _{t=0} \int_{T^{\ast,r}M}\overline{\psi^{\prime}\circ\pi}e^{-tE/\hbar}\psi\circ\pi\circ\Phi_{it} \\
\times\left( \sqrt{d\mathrm{vol}_{g}},\sqrt{\Phi_{it}^{\ast}d\mathrm{vol}_{g}}\right)  _{\text{BKS}}\frac{\omega^{n}}{(2\pi\hbar)^{n}n!}
\end{multline}
for all $\psi^{\prime}\in L^{2}(M,d\mathrm{vol}_{g})$, where we now understand $\psi\circ\pi\circ\Phi_{it}$ as the $J_{t}$-holomorphic function on $T^{\ast,r}M$ which is equal to $\psi$ on the zero section and recognize the combination $e^{-tE/\hbar}\psi\circ\pi\circ\Phi_{it}$ as a trivialized holomorphic section of $L\rightarrow T^{\ast,r}M$.

In order to make sense of the half-form $\sqrt{\Phi_{it}^{\ast}d\mathrm{vol}_{g}}$, note the family of polarizations $P^{it}=\left(  \Phi_{it}\right)_{\ast}P^{0},$ $t\in(0,1+\varepsilon)$, induces a family of canonical bundles $\mathcal{K}^{P^{it}},$ $t\in(0,1+\varepsilon)$, which is the analytic continuation of the family $\sigma\mapsto\Phi_{\sigma}^{\ast}\mathcal{K}^{P^{0}}$ considered as subbundles of the bundle $\bigwedge^{n}T^{\ast}(T^{\ast}M)^{\mathbb{C}}$ of $n$-forms on $T^{\ast}M.$ The bundle $\mathcal{K}^{P^{it}}$ is trivialized by the section $\Phi_{it}^{\ast}d\mathrm{vol}_{g}$, which is the analytic continuation of the family $\sigma\mapsto\Phi_{\sigma}^{\ast}d\mathrm{vol}_{g}$, and hence we can take the half-form bundle $\delta^{P^{it}}$ to be trivial with a trivializing section which we denote by $\sqrt{\Phi_{it}^{\ast}d\mathrm{vol}_{g}}$ to indicate that the isomorphism is determined by $\left(  \sqrt{\Phi_{it}^{\ast}d\mathrm{vol}_{g}}\right)^{2}=\Phi_{it}^{\ast}d\mathrm{vol}_{g}.$

By Theorem \ref{thm:ACS}, for $t>0,$ the time-$it$ polarization $P^{it}$ is everywhere transverse to the vertical polarization, and so we avoid the first of the analytic difficulties which arise in the standard dragging projection: the BKS pairing $\left(  \sqrt{d\mathrm{vol}_{g}},\sqrt{\Phi_{it}^{\ast}d\mathrm{vol}_{g}}\right)_{\text{BKS}}$ is a well-defined function everywhere on $T^{\ast,R}M$.

Since the analytic continuation of the geodesic flow has the same scaling behavior as the geodesic flow itself, the rescaling argument leading to (\ref{eqn:IsigmaRewrite}) is also valid for the Wick-rotated construction, whence the integral $J(t)$ of equation (\ref{eqn:WickJ}), which is the right-hand side of (\ref{eqn:QE2}), is equal to
\begin{equation}
\label{eqn:Jrescaled}
J(t)=t^{-n/2}\int_{T^{\ast,r}M}\overline{\psi^{\prime}\circ\pi}e^{-E/t\hbar}\psi\circ\pi\circ\Phi_{i}\left(  \sqrt{d\mathrm{vol}_{g}},\sqrt{\Phi_{i}^{\ast}d\mathrm{vol}_{g}}\right)  _{\text{BKS}}\frac{\omega^{n}}{(2\pi\hbar)^{n}n!}.
\end{equation}

We would like to apply Laplace's approximation to the integral $J(t)$. In order to do so, and to justify our claim that it is irrelevant whether we assume the adapted complex structure exists everywhere on $T^{\ast}M$ or only to a finite radius, we show in Theorem \ref{thm:tails} below that the contribution to both the integral $J(t)$ and its derivative of any region outside a neighborhood of the zero section is exponentially small. This is the essential step that is missing from the analysis of the usual dragging projection.

As usual, let $x^{j}$ be normal coordinates in a neighborhood $U$ of $q$ and let $p_{j}$ be the associated fiber coordinates of $T^{\ast}U$. At the point $q$, the metric is the identity, so if we define
\begin{equation}
\label{eqn:jtq}
j_{t}^{r}(q):=(2\pi\hbar)^{-n}t^{-n/2}\int_{B(r)}\psi\circ\pi\circ\Phi_{i}(q,p)~\left(  \sqrt{d\mathrm{vol}_{g}},\sqrt{\Phi_{i}^{\ast} d\mathrm{vol}_{g}}\right)_{(q,p)}e^{-\left\vert p\right\vert ^{2}/2t\hbar}d^{n}p,
\end{equation}
where $B(r)$ is the ball of radius $r$ in $\mathbb{R}^{n}$, then by (\ref{eqn:Ldecomp}) we have
\[
J(t)=\int_{M}\overline{\psi^{\prime}(q)}\,j_{t}(q)\,d\mathrm{vol}_{g}.
\]

The main result of this paper is the following theorem.

\begin{theorem}
\label{thm:main}
As $t\rightarrow0$,
\[
j_{t}^{r}(q)\sim\psi(q)-t\frac{\hbar}{2}\left(\Delta-\frac{1}{6}S\right)\psi(q)+O\left(t^{2}\right)  .
\]
\end{theorem}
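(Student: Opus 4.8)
The plan is to compute the asymptotic expansion of $j_t^r(q)$ by Laplace's method applied to the Gaussian integral over the fiber $T_q^*M \cong \mathbb{R}^n$, working in normal coordinates centered at $q$. The key point, established in Theorem \ref{thm:tails} (which we may assume), is that the integrand concentrates exponentially onto $p=0$ as $t\to 0$, so the finite radius $r$ is irrelevant to the expansion and we may freely extend the integral to all of $\mathbb{R}^n$ up to exponentially small errors. After rescaling $p \mapsto \sqrt{t\hbar}\,p$, the integral becomes $(2\pi)^{-n/2}\int_{\mathbb{R}^n} F_t(q,\sqrt{t\hbar}\,p)\, e^{-|p|^2/2}\, d^n p$, where $F_t(q,p) := \psi\circ\pi\circ\Phi_i(q,p)\cdot(\sqrt{d\mathrm{vol}_g},\sqrt{\Phi_i^*d\mathrm{vol}_g})_{(q,p)}$ is the product of the analytically-continued pulled-back function and the BKS half-form pairing. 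Laplace's method then reduces everything to Taylor-expanding $F_t(q,p)$ in $p$ to second order at $p=0$ and integrating term by term against the Gaussian: odd-order terms vanish, the zeroth-order term gives the leading coefficient, and the second-order term produces the $t$-coefficient via $\int p_j p_k\, e^{-|p|^2/2} d^np/(2\pi)^{n/2} = \delta_{jk}$.

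Next I would compute the relevant Taylor coefficients of $F_t(q,p)$ at $p=0$. For the function factor: $\psi\circ\pi\circ\Phi_i(q,0) = \psi(q)$ since the zero section is fixed by the geodesic flow, and one must expand $\psi\circ\pi\circ\Phi_i(q,p)$ to second order in $p$. Here $\pi\circ\Phi_i$ is the analytic continuation to imaginary time of $\pi\circ\Phi_\sigma = \exp_q(\sigma p)$, so formally $\pi\circ\Phi_i(q,p) = \exp_q(ip)$, and one expands $\psi(\exp_q(ip))$ using the normal-coordinate Taylor expansion of $\psi$; the second-order term in $p$ involves $-\tfrac12 (\mathrm{Hess}\,\psi)(q)$ with an extra $i^2=-1$, which is precisely what converts the would-be oscillatory stationary-phase contribution into the Laplacian $\Delta\psi$ with the correct sign (this is the heart of why the Wick rotation reproduces the same answer). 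For the half-form factor: using the definition (\ref{eqn:BKS}), $(\sqrt{d\mathrm{vol}_g},\sqrt{\Phi_i^*d\mathrm{vol}_g})_{(q,p)}$ is a square root of a ratio of Jacobian-type determinants built from $\det g_{jk}(\exp_q(ip))$ and the differential of $\exp_q(ip)$, analytically continued; one expands this to second order in $p$. The curvature enters here: the second-order behavior of the exponential map's Jacobian is governed by the Ricci tensor (the expansion $\det(d\exp_q)_{v} = 1 - \tfrac16 \mathrm{Ric}(v,v) + \cdots$), and combined with the metric determinant factor and the $i$'s from imaginary time, the contraction against the Gaussian turns $\mathrm{Ric}_{jk}\delta^{jk} = S$ into the scalar curvature term $-\tfrac{1}{6}S\psi(q)$.

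The main obstacle I expect is the bookkeeping of the half-form pairing factor: one must carefully identify $\left(\sqrt{d\mathrm{vol}_g},\sqrt{\Phi_i^*d\mathrm{vol}_g}\right)$ as an honest smooth function near $p=0$ (guaranteed by Theorem \ref{thm:ACS}, since $P^i$ is transverse to the vertical polarization), fix the correct branch of the square root (the sign ambiguity flagged after (\ref{eqn:BKS})), and then extract its second-order Taylor coefficient. This requires the expansion of $d^n(\exp_q(ip))/d^n p$ — equivalently the Jacobi-field / Van Vleck determinant — to second order, analytically continued to imaginary time, and tracking how the factors of $i$ interact with the factors of $i$ coming from $e^{-E/t\hbar}$ having been produced by the Wick rotation $\sigma \mapsto it$ applied to $e^{i\sigma E/\hbar}$. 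Once the two factors are expanded, assembling $F_t(q,p) = \psi(q) + (\text{linear in } p) + (\text{quadratic in } p) + O(p^3, t)$ and integrating against the normalized Gaussian is routine, and comparing with (\ref{eqn:QE2})–(\ref{eqn:Jrescaled}) yields $\tilde Q(E) = -\tfrac{\hbar^2}{2}(\Delta - \tfrac16 S)$. I would also need to justify interchanging $\tfrac{d}{dt}|_{t=0}$ with the fiber integral, which again follows from the exponential tail bound of Theorem \ref{thm:tails} together with uniform control of the Taylor remainders on a fixed ball $B(r)$.
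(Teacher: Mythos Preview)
Your proposal is correct and follows essentially the same approach as the paper: localize via Theorem~\ref{thm:tails}, Taylor-expand both the analytically continued factor $\psi\circ\pi\circ\Phi_i$ (giving $\psi + ip^k\partial_k\psi - \tfrac12 p^jp^k\partial_j\partial_k\psi + O(|p|^3)$) and the BKS half-form pairing (via the normal-coordinate expansion $\sqrt{\det g(\exp_q p)} = 1 - \tfrac16 R_{jk}p^jp^k + O(|p|^3)$, Wick-rotated and square-rooted to $1 + \tfrac{1}{12}R_{jk}p^jp^k + \cdots$), then integrate term by term against the Gaussian. The only cosmetic difference is that you rescale $p\mapsto\sqrt{t\hbar}\,p$ before expanding, whereas the paper keeps $p$ and evaluates the Gaussian moments $\int p_{i_1}\cdots p_{i_l}e^{-|p|^2/2t\hbar}d^np$ directly; both organizations of Laplace's method are equivalent.
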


\bigskip
Hence by (\ref{eqn:QE2}) and (\ref{eqn:Jrescaled}) we obtain%
\[
\tilde{Q}(E)=-\frac{\hbar^{2}}{2}(\Delta-\frac{1}{6}S).
\]

Before proving Theorem \ref{thm:main}, we show that the contribution to $J(t)$ coming from any region bounded away from the zero section is exponentially small.

\begin{theorem}
\label{thm:tails}Let $r_{0}\in(0,r)$, let $f$ be a function on $T^{\ast,r}M\setminus T^{\ast,r_{0}}M$, and suppose that for some $t$ and each $q\in M$,
\[
\left\vert \int_{T_{q}^{\ast,r}M\setminus T_{q}^{\ast,r_{0}}M}fe^{-E/t\hbar}d^{n}p\right\vert <\infty.
\]
Then as $t\rightarrow0$,
\[
\int_{T^{\ast,r}M\setminus T^{\ast,r_{0}}M}fe^{-E/t\hbar}\omega^{n}=O(e^{-r_{0}^{2}/2t\hbar}).
\]
\end{theorem}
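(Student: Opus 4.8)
The plan is to reduce the claim to a pointwise estimate on each cotangent fiber and then integrate over the (compact) base. Writing $E(q,p) = \tfrac12|p|^2$ on $T_q^\ast M$, the key observation is that on the shell $T_q^{\ast,r}M \setminus T_q^{\ast,r_0}M$ we have $|p| \geq r_0$, so $E \geq r_0^2/2$. The idea is to peel off a factor $e^{-r_0^2/2t\hbar}$ from the exponential weight and show that what remains is integrable uniformly enough that the leftover is bounded as $t\to 0$ (indeed it goes to $0$, but $O(1)$ suffices against the prefactor). Concretely, first I would fix $q$ and split $E/t\hbar = r_0^2/2t\hbar + (E - r_0^2/2)/t\hbar$, so that
\[
\int_{T_q^{\ast,r}M \setminus T_q^{\ast,r_0}M} f e^{-E/t\hbar}\, d^np = e^{-r_0^2/2t\hbar}\int_{T_q^{\ast,r}M \setminus T_q^{\ast,r_0}M} f e^{-(E - r_0^2/2)/t\hbar}\, d^np.
\]
On the shell the exponent $-(E-r_0^2/2)/t\hbar$ is nonpositive, so for $t$ in a bounded range the second integral is dominated in absolute value by $\int |f| e^{-(E-r_0^2/2)/t_*\hbar}\,d^np$ for any fixed reference $t_*$ no larger than the given $t$ — a finite quantity by the hypothesis (applied with that $t_*$, or more simply: the hypothesis at the given $t$ already gives finiteness, and monotonicity of $s\mapsto e^{-sE}$ for the sign-definite exponent handles smaller $t$). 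Thus the fiber integral is $O(e^{-r_0^2/2t\hbar})$ with a constant controlled by a $q$-dependent but integrable bound.

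The next step is to pass from the fiber integral to the integral against $\omega^n$ over $T^{\ast,r}M\setminus T^{\ast,r_0}M$. Using the decomposition of the Liouville form in normal coordinates, equation (\ref{eqn:Ldecomp}), we have $\omega^n = (2\pi\hbar)^n \det(g_{jk}(q))^{-1/2}\, d\mathrm{vol}_g \wedge d^np$ locally, hence (globally, by a partition of unity or by the coordinate-free identity $\varepsilon$ restricted to fibers equals Lebesgue measure times the base volume) the full integral is $\int_M \big(\int_{\text{shell}_q} f e^{-E/t\hbar}\, d^np\big)\, d\mathrm{vol}_g(q)$ up to the harmless constant $(2\pi\hbar)^n n!$. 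Applying the fiberwise bound inside and pulling out $e^{-r_0^2/2t\hbar}$, the base integral of the remaining $q$-dependent constant converges provided that constant is integrable over $M$; since $M$ is compact this is automatic once one knows the bound is, say, continuous in $q$, which follows from continuity of $f$ and dominated convergence. This yields $\int_{\text{shell}} f e^{-E/t\hbar}\,\omega^n = O(e^{-r_0^2/2t\hbar})$, as claimed.

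The main obstacle, and the place requiring genuine care, is the uniformity in $q$: the hypothesis only asserts finiteness of the fiber integral for each individual $q$, not a bound uniform over $M$, so one must either invoke compactness together with some continuity/semicontinuity of $q\mapsto \int |f|e^{-E/t\hbar}\,d^np$ (which holds when $f$ is continuous on the closed shell and the shell has finite radius $r<\infty$, so the integrand is genuinely dominated), or observe that in the intended application $f$ is the smooth integrand of (\ref{eqn:Jrescaled}) — a product of bounded real-analytic data with the BKS half-form pairing, which Theorem \ref{thm:ACS} guarantees is smooth and bounded on the finite tube $T^{\ast,r}M$ — so the fiber integrals are in fact uniformly bounded and the compactness argument is straightforward. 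A secondary point to check is the interchange justifying differentiation: the statement is about $J(t)$ itself, but the same estimate applied to $\partial_t$ of the integrand (which introduces an extra polynomial factor of $E/t^2\hbar$, absorbed into a slightly worse but still exponentially-beaten constant, e.g. $O(e^{-r_0^2/4t\hbar})$ after splitting the exponent in a $2{:}1$ ratio instead of using all of it) shows the derivative is likewise exponentially small, which is what legitimizes ignoring the region outside a neighborhood of the zero section when computing $\left.\tfrac{d}{dt}\right|_{t=0} J(t)$.
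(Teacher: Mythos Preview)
Your proposal is correct and follows essentially the same route as the paper. Both arguments exploit that $E\geq r_0^2/2$ on the shell to peel an $e^{-r_0^2/2t\hbar}$ out of the weight, bound the leftover fiber integral by a $q$-dependent constant, and then invoke compactness of $M$; the only cosmetic difference is that the paper factors the exponential as $e^{-E}\cdot e^{-(1/t\hbar-1)E}$ and quotes H\"older (together with the decomposition $f=f_+-f_-$), whereas you factor as $e^{-r_0^2/2t\hbar}\cdot e^{-(E-r_0^2/2)/t\hbar}$ and use monotonicity in $t$ directly. Your additional remarks on uniformity in $q$ and on the derivative are not in the paper's proof but are exactly the points one needs for the application, so they are welcome rather than superfluous.
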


\begin{proof}
Suppose first that $f\geq0.$ Then H\"{o}lder's inequality gives
\[
\int_{T_{q}^{\ast,r}M\setminus T_{q}^{\ast,r_{0}}M}fe^{-E/t\hbar}d^{n} p\leq\int_{T_{q}^{\ast,r}M\setminus T_{q}^{\ast,r_{0}}M}fe^{-E}d^{n}p\cdot\left\Vert e^{-(1/t\hbar-1)E}\right\Vert _{L^{\infty}(A_{q},d^{n}p)}.
\]
Since $\inf_{p:g^{jk}(q)p_{j}p_{k}\in(r_{0},r)}\{g^{jk}(q)p_{j}p_{k}/2\}=r_{0}^{2}/2$, the right-hand side above is equal to a possibly
$q$-dependent constant times $e^{-r_{0}^{2}/2t\hbar}.$

For general $f$, write $f=f_{+}-f\_$ where $f_{+}:=\max\{f,0\}$ and $f_{-}=-\min\{f,0\}.$ Then applying the first paragraph, we have
\[
\left\vert \int_{T_{q}^{\ast,r}M\setminus T_{q}^{\ast,r_{0}}M}fe^{-E/t\hbar}d^{n}p\right\vert \leq(C_{+}(q)+C_{-}(q))e^{-r_{0}^{2}/2t\hbar}.
\]
Since $M$ is compact, integrating over $M$ yields the result.
\end{proof}

\bigskip
Observe that Theorem \ref{thm:tails} is the same whether we take $r=\infty$ or $r<\infty$; this is why it is sufficient for our purposes to assume that the adapted complex structure only exists on a finite-radius tubular neighborhood of the zero section.

\bigskip
The remainder of this section is dedicated to the proof of Theorem \ref{thm:main}. Theorem \ref{thm:tails} allows us to apply Laplace's approximation to $j_{t}^{r}(q)$. Since $\psi$ and the metric $g$ are real analytic, they admit convergent Taylor series expansions in some neighborhood of each point, and our basic strategy is to Taylor expand the integrand $\psi\circ\pi\circ\Phi_{i}~\left(  \sqrt{d\mathrm{vol}_{g}},\sqrt{\Phi_{i}^{\ast}d\mathrm{vol}_{g}}\right)  $ around $x=0$. In order to insure that the expansions are valid, as we explain after (\ref{eqn:expDvol}) below, we must first replace $r$ by
\begin{equation}
\label{eqn:rprime}
r^{\prime}:=\min\{r,\min_{M}\{\sqrt{3/\left\Vert R_{jk}(q)\right\Vert }\}\},
\end{equation}
where $R_{jk}$ is the Ricci curvature tensor. We will then obtain expressions involving integrals of the form $\int_{-r^{\prime}}^{r^{\prime}}y^{m}e^{-y^{2}/2t}dy$. One checks that this is asymptotically equal to $\int_{\mathbb{R}}y^{m}e^{-y^{2}/2t}dy$ for small $t$. Then, since the integral $\int_{\mathbb{R}}y^{m}e^{-y^{2}/2t}dy$ is equal to $0$ if $m$ is odd and to $t^{k}\sqrt{2\pi t}(2k-1)!!$ if $m=2k$,
\begin{equation}
\label{eqn:gaussInt}
t^{-n/2}\int_{\mathbb{R}^{n}}p_{i_{1}}\cdots p_{i_{l}}e^{-\left\vert p\right\vert^{2}/2t\hbar}d^{n}p
\end{equation}
is equal to $\sqrt{2\pi\hbar}^{n}$ when $l=0$, vanishes when $l=1$ or $l=2$ and $i_{1}\neq i_{2}$, and is $O(t^{2})$ when $l\geq3$. In particular, to compute $\zeta_{1}$, we only need to keep track of terms up to order $O(\left\vert p\right\vert ^{3})$.

Since $X_{E}$ applied to the vertically constant function $\psi\circ\pi$ is $g^{jk}(x)p_{j}\frac{\partial\psi}{\partial x^{k}},$ the term $\psi\circ\pi\circ\Phi_{i}$ can be written as a Taylor series, centered at $q$, in the fiber coordinate $p$ as
\begin{equation}
\label{eqn:psiTaylor}
\psi\circ\pi\circ\Phi_{i}(q,p)=\psi(q)+ip^{k}\frac{\partial\psi}{\partial x^{k}}(q)-\frac{1}{2}p^{j}p^{k}\frac{\partial^{2}\psi}{\partial x^{j}\partial x^{k}}(q)+O(\left\vert p\right\vert ^{3}),
\end{equation}
where we write $p^{j}:=g^{jk}(q)p_{k}=\delta^{jk}p_{k}$ \cite{Hall-K_acs}. (At an arbitrary point $(x,p)$ in our coordinate neighborhood, the Taylor expansion has another factor involving derivatives of the metric, but the first-order derivatives of the metric vanish at $q$ in normal coordinates.) Note that $r$ was chosen so that this expansion is valid for $p\in T^{\ast,r}M$.

\bigskip
To compute $\left(  \sqrt{d\mathrm{vol}_{g}},\sqrt{\Phi_{i}^{\ast}d\mathrm{vol}_{g}}\right)  _{\text{BKS}}$ we need to compute the $dp$ term of $\Phi_{t}^{\ast} d\mathrm{vol}_{g}$, which is just the restriction of $\Phi_{t}^{\ast}d\mathrm{vol}_{g}$ to the vertical tangent space. Since the time-$1$ geodesic flow restricted to the vertical tangent space $V_{(q,p)}(T^{\ast}M)\simeq T_{q}^{\ast}M$ is the exponential map, the vertical part of $\Phi_{t}^{\ast}d\mathrm{vol}_{g}$ is
\begin{equation}
\label{eqn:phiDvol}
\left.  \Phi_{t}^{\ast}d\mathrm{vol}_{g}\right\vert _{T_{q}^{\ast}M}=\left.\left(  N_{t}^{\ast}\circ\exp_{q}^{\ast}\right)  d\mathrm{vol}_{g}\right\vert_{T^{\ast}M}.
\end{equation}
Leaving aside the $t$-rescaling for a moment, the Taylor expansion of the Riemannian volume form in normal coordinates is
\begin{equation}
\label{eqn:expDvol}
\sqrt{\det(g_{jk}(\exp_{q}(p)))}=1-\frac{1}{6}R_{jk}(q)p^{j}p^{k}+O\left(\left\vert p\right\vert ^{3}\right).
\end{equation}
This follows from the standard Taylor expansions $\det(g_{jk}(x))=1-\frac{1}{3}R_{jk}(q)x^{j}x^{k}+O(\left\vert x\right\vert ^{3})$ and
\begin{equation}
\label{eqn:TaylorSR}
\sqrt{1-x}=1-\frac{1}{2}x+O(x^{2})
\end{equation}
about $x=0$; since the latter is valid for $\left\vert x\right\vert <1$, (\ref{eqn:expDvol}) is valid for $\left\vert R_{jk}(q)p^{j}p^{k}\right\vert <3$, which explains (\ref{eqn:rprime}).

By Gauss' lemma \cite[Lemma 5.3]{doCarmo}, $\det((\exp_{q})_{\ast})_{p}=1$, whence
\[
d^{n}(\exp_{q}(p))=d^{n}p+dq\text{-terms.}
\]
Combining this ith (\ref{eqn:expDvol}), with $p$ replaced by $tp$ because of the rescaling in (\ref{eqn:phiDvol}), the analytic continuation of $\left. (\pi\circ\Phi_{t})^{\ast}d\mathrm{vol}_{g}\right\vert _{T_{q}^{\ast}M}$ to $t=\sqrt{-1}$ is
\[
\left(  d\mathrm{vol}_{g},(\pi\circ\Phi_{i})^{\ast}d\mathrm{vol}_{g}\right) =(2\pi\hbar)^{n}\left(  1+\frac{1}{6}p^{j}p^{k}R_{jk}(q)\right)  +O\left(\left\vert p\right\vert^{3}\right).
\]
After using (\ref{eqn:TaylorSR}) again, we obtain
\begin{equation}
\label{eqn:BKSpairing}
\sqrt{\left(d\mathrm{vol}_{g},(\pi\circ\Phi_{i})^{\ast}d\mathrm{vol}_{g}\right)} =(2\pi\hbar)^{n/2}\left(1+\frac{1}{12}p^{j}p^{k}R_{jk}(q)\right)+O\left(\left\vert p\right\vert^{3}\right).
\end{equation}

For any multi-index $\alpha$, one has $\int_{B(r^{\prime})}p^{\alpha}e^{-\left\vert p\right\vert ^{2}/2t\hbar}d^{n}p\sim\int_{\mathbb{R}^{n}}p^{\alpha}e^{-\left\vert p\right\vert ^{2}/2t\hbar}d^{n}p$ as $t\rightarrow 0$. Now put (\ref{eqn:psiTaylor}) and (\ref{eqn:BKSpairing}) into the integral $j_{t}^{r^{\prime}}(q)$ defined in (\ref{eqn:jtq}) and replace the integrals over $B(r^{\prime})$ with integrals over $\mathbb{R}^{n}$. Evaluating the resulting Gaussian integrals according to (\ref{eqn:gaussInt}) then yields
\begin{align*}
j_{t}^{r^{\prime}}(q) &  =\left(  2\pi\hbar t\right)  ^{-n/2}\int_{\mathbb{R}^{n}}\left(  \psi+ip^{k}\frac{\partial\psi}{\partial x^{k}}-\frac{1}{2}p^{j}p^{k}\frac{\partial^{2}\psi}{\partial x^{j}\partial x^{k}}\right)  \\
&  \qquad\qquad\qquad\qquad\qquad\times\left(  1+\frac{1}{12}p^{j}p^{k}R_{jk}\right)  e^{-\left\vert p\right\vert ^{2}/2t\hbar}d^{n}p+O(t^{2})\\
&  =\left(  2\pi\hbar t\right)  ^{-n/2}\left[  (2\pi\hbar t)^{n/2}\psi-(2\pi\hbar t)^{n/2}\frac{\delta^{jk}t\hbar}{2}\left(  \frac{\partial^{2}\psi
}{\partial x^{j}\partial x^{k}}-\frac{1}{6}R_{jk}\psi\right)  +O\left(t^{2}\right)  \right]  \\
&  =\left(  \psi-\frac{t\hbar}{2}\left(  \Delta-\frac{1}{6}S\right)\psi+O\left(  t^{2}\right)  \right)
\end{align*}
where $S(q)=R_{~j}^{j}(q)$ is the scalar curvature and $\Delta$ is the Laplacian (which, when evaluated at $q$ in normal coordinates centered at $q$,
is simply $\Delta\psi=\sum_{j}\partial^{2}\psi/\partial\left(  x^{j}\right)^{2}$).

The Wick-rotated ``dragging projection'' $\tilde{Q}(E)$ of the geodesic flow onto the vertically polarized quantum Hilbert space, defined in (\ref{eqn:QE2}), is therefore determined by requiring that for each $\psi^{\prime}\in L^{2}(M,d\mathrm{vol}_{g})$,
\begin{align*}
&\left\langle \psi^{\prime}, Q(E)\psi\right\rangle_{L^{2}(M,d\mathrm{vol}_{g})} \\
&  :=\hbar\left.  \frac{d}{dt}\right\vert _{t=0}\int_{M}\bar{\psi}^{\prime}(q)\left(  \psi(q)-\frac{t\hbar}{2}\left(  \Delta-\frac{1}{6}S(q)\right)  \psi(q)+O\left(  t^{2}\right)  \right)  d\mathrm{vol}_{g}\\
&  =\int_{M}\bar{\psi}^{\prime}\left[  -\frac{\hbar^{2}}{2}\left(\Delta-\frac{1}{6}S\right)  \psi\right]  d\mathrm{vol}_{g}
\end{align*}
which proves the theorem.

\bigskip
{\small
\providecommand{\bysame}{\leavevmode\hbox to3em{\hrulefill}\thinspace}
\providecommand{\MR}{\relax\ifhmode\unskip\space\fi MR }
\providecommand{\MRhref}[2]{%
  \href{http://www.ams.org/mathscinet-getitem?mr=#1}{#2}
}
\providecommand{\href}[2]{#2}

}
\end{document}